\documentclass[10pt]{amsart}



\usepackage[english]{babel}
\usepackage[latin1]{inputenc}
\usepackage{amsmath}
\usepackage{amsthm}
\usepackage{amsfonts}
\usepackage{amssymb, latexsym, graphics, showidx}
\usepackage{color}

\numberwithin{equation}{section}

\newtheorem{thm}{Theorem}[section]
\newtheorem{teo}{Theorem}[section]
\newtheorem{rem}[thm]{Remark}
\newtheorem{cor}[thm]{Corollary}

\newtheorem{pro}[thm]{Proposition}
\newtheorem{lem}[thm]{Lemma}

\newtheorem{hyp}{Assumption}

\renewcommand{\dim}{\begin{proof}}

\newcommand{\finedim}{\end{proof}}
\newcommand{\R}{\mathbb{R}}
\newcommand{\Tt}{\mathbb{T}}
\newcommand{\N}{\mathbb{N}}
\newcommand{\Z}{\mathbb{Z}}

\newcommand{\al}{\alpha}

\newcommand{\ep}{\epsilon}

\newcommand{\di}{\text{div}}
\newcommand{\intor}{\int_{\Tt^N}}

\newcommand{\Rr}{{\mathbb{R}}}

\newcommand{\beq}{\begin{equation}}
\newcommand{\eeq}{\end{equation}}
\newcommand{\beqs}{\begin{equation*}}
\newcommand{\eeqs}{\end{equation*}}
\newcommand{\beqa}{\begin{eqnarray}}
\newcommand{\eeqa}{\end{eqnarray}}
\newcommand{\beqas}{\begin{eqnarray*}}
\newcommand{\eeqas}{\end{eqnarray*}}

\begin{document}

\title{Weakly coupled mean-field game systems}

\author{Diogo A. Gomes}
\address[D. A. Gomes]{
        King Abdullah University of Science and Technology (KAUST), CEMSE Division , Thuwal 23955-6900. Saudi Arabia.}
\email{diogo.gomes@kaust.edu.sa}
\author{Stefania Patrizi}
\address[S. Patrizi]{University of Texas at Austin, Austin, Texas, USA}
\email{spatrizi@math.utexas.edu}

\keywords{Mean Field Games; Weakly coupled systems; Optimal Switching}
\subjclass[2010]{
        35J47, 
        35A01} 

\thanks{
        D. Gomes was partially supported by KAUST baseline and start-up funds.
        S. Patrizi was partially supported by NSF grant DMS-1262411 ``Regularity and stability results in variational problems". 
}
\date{\today}
\maketitle

\begin{abstract}
Here, we prove the existence  of solutions to first-order mean-field games (MFGs) arising in optimal switching. 
First, we use the penalization method to construct approximate solutions. Then, we prove uniform estimates for the penalized problem. 
Finally, by a limiting procedure, we obtain  solutions to the MFG problem.  
\end{abstract}

\section{Introduction}

The mean-field game (MFG)\  framework \cite{Caines2,Caines1,  ll1, ll2, ll3, ll4}
is a class of methods used to study  
large populations of rational, non-cooperative agents.
MFGs have been the focus of intense research, 
 see, for example, the surveys \cite{GPV, GS}.
Here, we investigate MFGs that arise in optimal switching. These games are given by a weakly coupled system of Hamilton-Jacobi equations of the obstacle type and a corresponding system of transport equations. 

To simplify the presentation, we use periodic boundary conditions. Thus, the spatial domain is the $N$-dimensional
flat torus, $\Tt^N$. 
Our MFG is determined by a value function, $u:\Tt^N\to\Rr^d$, a probability density, $\theta:\Tt^N\to(\Rr^+)^d$, and a
 switching current, $\nu,$ that together satisfy the following system of variational inequalities:
\begin{equation}\label{obstacleepmfg1}
\max\left(H^i(Du^i,x)+u^i- g(\theta^i),\max_j\left(u^i-u^j-\psi^{ij}\right)
\right)=0
\end{equation}
%
coupled with the system\begin{equation}\label{obstacleepmfg2}
-\di (D_p H^i(Du^i,x)\theta^i)+\theta^i+\sum_{j\neq i}\left(\nu^{ij}-\nu^{ji}\right)=1.
\end{equation}
Moreover, for 
$1\leq i,j\leq d$, 
$\nu^{ij}$ is a non-negative measure on $\Tt^N$\ supported in the set $u^i-u^j-\psi^{ij}=0$.

This system models a stationary population of agents. Each agent moves in $\Tt^N$ and can switch between different modes that are given by the index $i$. Their actions seek to minimize a certain cost. Agents can change their state by continuously modifying their
spatial position, $x$, and by switching between different modes, $i$ to $j$, at a cost
$\psi^{ji}$. The function $u^i(x)$ is the value function for an agent whose spatial location is  $x$ and whose mode is $i$. The function $\theta^i(x)$ is the density of the agents on $\Tt^N\times \{1, \hdots, d\}$. Thus,  we require that  $\theta^i(x)\geq 0$. We note that $\theta^i$ is not a probability measure on $\Tt^N\times \{1, \hdots, d\}$ because the source term in the right-hand side of \eqref{obstacleepmfg2} is not normalized.

In Section \ref{opmfg}, we discuss
detailed assumptions on the Hamiltonians $H^i$, on the nonlinearity $g$, and on the switching costs $\psi^{ij}$. A concrete example that satisfies those is
\begin{equation}
\label{HHYP}
H^i(x,p)=\frac{|p|^2}{2}+V^i(x),  \qquad g(\theta)=\ln \theta, \quad \text{and}\quad \psi^{ij}(x)=\eta, 
\end{equation}
with $V^i:\Tt^N\to\Rr$ being a $C^\infty$ function and $\eta$ being a positive real number.
Another case of interest is the polynomial nonlinearity,
 $g(m)=m^\al$ for $\alpha>0$. 

Standard MFGs involve two equations, a Hamilton-Jacobi equation and a transport
or Fokker-Planck equation. This latter equation is the adjoint of the linearization
of the former.
Because the non-linear operator in \eqref{obstacleepmfg1} is non-differentiable,
\eqref{obstacleepmfg2} is obtained by a limiting procedure. 
 In the context of MFGs, 
this method was first used in \cite{GPat}.
%
Here, we consider the following penalized problem. 
%
 \begin{equation}\label{obstacleepmfg1sys}
H^i(Du^i,x)+u^i+\sum_{j\ne i}\beta_\ep(u^i-u^j-\psi^{ij}) =g(\theta^i)\
\end{equation}
\begin{equation}\label{obstacleepmfg2sys}
-\di (D_p H^i(Du^i,x)\theta^i)+\theta^i+\sum_{j\ne i}\beta'_\ep(u^i-u^j-\psi^{ij})
\theta^i -\beta'_\ep(u^j-u^i-\psi^{ji}) \theta^j =1, 
\end{equation}
where the penalty function, $\beta_\epsilon$, is an increasing $C^\infty$ function
and $\epsilon>0.$ We assume that, as $\epsilon \to 0$, $\beta_\epsilon(s)\to \infty$ for $s>0$ and $\beta_\epsilon(s)=0$ for $s\leq 0$, see Assumption \ref{B1}. The study of optimal switching has a long history that predates
 viscosity solutions and, certainly, MFGs,  see, for example
\cite{ Belb-81,CaEv-84, CaMM-83,  EvFr-79}. In those references, the use of a penalty to
approximate a non-smooth Hamilton-Jacobi equation is a fundamental tool. The penalty in \eqref{obstacleepmfg1sys} is similar to the ones in
the aforementioned references. 

More recently, several authors have investigated weakly coupled Hamilton-Jacobi equations \cite{HT}, the corresponding extension of the weak KAM and Aubry-Mather theories \cite{CGT2, DaZa-14, MSTY-15}, the asymptotic behavior of solutions \cite{CGMT, CLLN-12, MiTr-14b, MiTr-12, Nguy-14}, and homogenization  \cite{MiTr-14a}.
In these references, the state of the system has different modes, and a random process   drives the switching between them. In contrast, here, the switches occur at deterministic times. 
Thus, our models are the MFG counterpart of the Hamilton-Jacobi systems considered in 
\cite{FFG,GoSe-11}.
MFGs with different populations \cite{MR3333058, cirant2}
are a limit case of  \eqref{obstacleepmfg1}-\eqref{obstacleepmfg2}. This can be seen by taking the limit  $\psi^{ij}\to +\infty$; that is, the case where agents are not allowed to change their state.

The development of the existence and regularity theory for MFGs has seen substantial progress in recent years. Uniformly elliptic and parabolic MFGs
are now well understood, and   the existence of smooth and weak solutions
has been  established  in a   broad range of problems, see, respectively,  \cite{GPatVrt,
GPim2, GPim1,GM, GPM1, GPM2,GPM3} and \cite{cgbt, porretta, porretta2}.
However, the regularity theory for first-order MFGs is less developed
and, in general,  only weak solutions are known to exist \cite{Cd1, Cd2, MR3358627}. Variational inequality methods are at the heart of a new class of techniques to establish the existence of weak solutions, both for first- and second-order problems  \cite{ FG2} and for their numerical approximation \cite{AFG}.
Some  MFGs that arise in applications, such as congestion 
\cite{GMit, GVrt2}
or obstacle-type problems \cite{GPat}, feature singularities. Thus, there is keen interest in developing methods for their analysis. To the best of our knowledge, this paper is the first to address MFGs arising in optimal switching. Moreover, our techniques contribute to better understanding of the regularity of first-order MFGs. 

Our main result is the following theorem. 
\begin{teo}
\label{main1}
Suppose that Assumptions \ref{A1}-\ref{A4} (see Section \ref{opmfg})
hold and that either
\begin{itemize}
\item[-]  Assumption \ref{A5} {\bf L} or
\item[-]  Assumptions \ref{A5} {\bf P-$\frac 2 N$}, \ref{A6} and \ref{A7}

\end{itemize}
hold. Then, there exists a solution, $(u, \theta) $, of \eqref{obstacleepmfg1}-\eqref{obstacleepmfg2}, with $u\in (W^{2,2}(\Tt^N))^d\cap (C^\gamma(\Tt^N))^d$ for some $\gamma\in (0,1)$ and $\theta\in (W^{1,2}(\Tt^N))^d$.
\end{teo}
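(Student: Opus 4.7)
The plan is to realize $(u,\theta)$ as a limit, as $\ep\to 0$, of solutions $(u_\ep,\theta_\ep)$ to the penalized system \eqref{obstacleepmfg1sys}-\eqref{obstacleepmfg2sys}, and to identify the switching currents $\nu^{ij}$ as weak-$*$ limits of the non-negative measures $\mu^{ij}_\ep:=\beta'_\ep(u_\ep^i-u_\ep^j-\psi^{ij})\theta_\ep^i$ on $\Tt^N$. This extends the variational-inequality/penalization scheme of \cite{GPat} to the vector-valued, weakly-coupled setting.

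The first step is, for each fixed $\ep>0$, to produce a smooth solution $(u_\ep,\theta_\ep)$ by a Schauder fixed-point argument on $\theta\mapsto u[\theta]\mapsto\tilde\theta$. Given $\theta$, the smoothness of $\beta_\ep$ together with Assumptions \ref{A1}-\ref{A4} yields a classical $u[\theta]$ solving \eqref{obstacleepmfg1sys} via Bernstein/Schauder estimates: the zeroth-order term $u^i$ supplies coercivity, and the monotonicity $\beta'_\ep\geq 0$ preserves the comparison principle across components. The system \eqref{obstacleepmfg2sys} is then a linear transport system in $\theta$ whose off-diagonal coupling through the $\beta'_\ep$ matrix is non-negative and whose diagonal zeroth-order term $\theta^i$ makes the operator invertible on $(L^2(\Tt^N))^d$; elliptic regularity of the map $\theta\mapsto \tilde\theta$ provides the required compactness.

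The second step is to derive $\ep$-uniform estimates. An $L^\infty$ bound on $u_\ep^i$ comes from comparing with the scalar sub- and super-solutions of the decoupled problem $H^i(Du,x)+u=g(\theta^i)$, for which the penalty terms have the correct sign. Multiplying \eqref{obstacleepmfg1sys} by $\theta_\ep^i$, multiplying \eqref{obstacleepmfg2sys} by $u_\ep^i$, summing in $i$ and subtracting produces a fundamental identity in which the penalty contributions, after the symmetrization $(i,j)\leftrightarrow(j,i)$ and the convexity inequality $\beta'_\ep(s)s\geq\beta_\ep(s)$, combine into a non-negative quantity (using $\psi^{ij}\geq 0$). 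This yields $L^q$ control on $\theta_\ep^i$ (with exponent dictated by whether Assumption \ref{A5} \textbf{L} or \ref{A5} \textbf{P-$\frac{2}{N}$} is in force, the latter also requiring \ref{A6}-\ref{A7}) together with a uniform $L^1$ bound on $\mu^{ij}_\ep$ and on $\beta_\ep(u_\ep^i-u_\ep^j-\psi^{ij})\theta_\ep^i$. A Bernstein-type computation, obtained by differentiating \eqref{obstacleepmfg1sys}, multiplying by $\theta_\ep^i$, and using \eqref{obstacleepmfg2sys} to absorb the transport terms, gives the $(W^{2,2})^d$ bound on $u_\ep$; elliptic regularity applied to \eqref{obstacleepmfg2sys} and Morrey embedding upgrade this to the $(W^{1,2})^d$ bound on $\theta_\ep$ and to $(C^\gamma)^d$ on $u_\ep$.

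The final step is passage to the limit: a subsequence satisfies $u_\ep\to u$ in $(C^\gamma)^d$ and weakly in $(W^{2,2})^d$, $\theta_\ep\to\theta$ weakly in $(W^{1,2})^d$ and a.e., and $\mu^{ij}_\ep\rightharpoonup \nu^{ij}$ weakly-$*$ in measures. The uniform $L^1$ bound on $\beta_\ep(u_\ep^i-u_\ep^j-\psi^{ij})$ combined with $\beta_\ep\to\infty$ on $\{s>0\}$ (Assumption \ref{B1}) forces $u^i-u^j-\psi^{ij}\leq 0$ in the limit, while dropping the non-negative penalty from \eqref{obstacleepmfg1sys} yields $H^i(Du^i,x)+u^i-g(\theta^i)\leq 0$. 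Passing to the limit in \eqref{obstacleepmfg2sys} gives \eqref{obstacleepmfg2}. The main obstacle, and the delicate step, is to prove simultaneously that $\supp\nu^{ij}\subseteq\{u^i-u^j-\psi^{ij}=0\}$ and that the complementarity in the $\max$ structure of \eqref{obstacleepmfg1} holds: testing $\mu^{ij}_\ep$ against a function supported in $\{u^i-u^j-\psi^{ij}<-\delta\}$, where the $C^\gamma$ convergence of $u_\ep$ guarantees the same strict inequality for $u_\ep$ and hence $\beta'_\ep\equiv 0$ there by Assumption \ref{B1}, gives the support condition; on the complementary open set the penalty contribution vanishes in the limit, so \eqref{obstacleepmfg1sys} reduces to $H^i(Du^i,x)+u^i=g(\theta^i)$ and supplies the required equality.
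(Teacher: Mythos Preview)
Your overall strategy---penalize, derive $\ep$-uniform estimates, pass to the limit---matches the paper's, and your description of the estimates (the multiply-by-$\theta^i$/multiply-by-$u^i$/subtract identity for $L^1$ control of the currents, then differentiating \eqref{obstacleepmfg1sys} and pairing with \eqref{obstacleepmfg2sys} for the $W^{2,2}$ bound) and of the limit passage (including the support argument for $\nu^{ij}$ via $C^\gamma$ convergence) are essentially the paper's Lemmas~\ref{estimate1lem}--\ref{estimateslem2}, Propositions~\ref{prolog}--\ref{MEst}, and Section~\ref{pmain1}.

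The gap is in your first step. You propose a Schauder fixed point on $\theta\mapsto u[\theta]\mapsto\tilde\theta$, invoking ``Bernstein/Schauder estimates'' for $u[\theta]$ and ``elliptic regularity'' for $\tilde\theta$. But both \eqref{obstacleepmfg1sys} and \eqref{obstacleepmfg2sys}, taken \emph{individually}, are first-order: \eqref{obstacleepmfg1sys} is a weakly coupled Hamilton--Jacobi system and \eqref{obstacleepmfg2sys} is a linear transport system. Bernstein and Schauder estimates do not apply to either, and there is no elliptic regularity to supply compactness of the fixed-point map. The paper avoids this by a different route (Theorem~\ref{main2}, Section~\ref{pmain2}): it treats $(u,\theta)$ \emph{jointly} via the continuation method. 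The key observation is that substituting $\theta^i=g^{-1}\big(H^i(Du^i,x)+u^i+\sum_{j\neq i}\beta_\ep(\cdots)\big)$ from \eqref{obstacleepmfg1sys} into \eqref{obstacleepmfg2sys} yields a genuinely second-order quasilinear elliptic system for $u$; the linearized operator $\mathcal{L}_{\lambda_0}$ is then shown to be an isomorphism by a coercivity argument exploiting the monotone coupling (Lemmas~\ref{Ainjectivelem}--\ref{Lisomorphlem}). Without this coupling-induced ellipticity, the existence step for fixed $\ep$ does not go through. A secondary point: the $L^\infty$ bound on $u_\ep$ is not obtained by comparison with decoupled scalar problems as you suggest, but rather via $W^{1,p}$ estimates and Sobolev embedding (Propositions~\ref{prolog} and~\ref{propower}).
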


 As mentioned before, to
prove the existence of solutions for \eqref{obstacleepmfg1}-\eqref{obstacleepmfg2},  we first examine the existence of solutions for  \eqref{obstacleepmfg1sys}-\eqref{obstacleepmfg2sys}, prove $\epsilon$\ independent bounds and, subsequently, consider the limit $\epsilon \to 0 $. On the existence of solutions,  our main result is the following theorem. 
\begin{teo}
\label{main2}
Suppose that Assumptions \ref{A1}-\ref{A4} (see Section \ref{opmfg}) and \ref{B1} hold, and either

\begin{itemize}
\item[-]  Assumption \ref{A5} {\bf L} or
\item[-]  Assumptions \ref{A5} {\bf P-$\frac 2 N$}, \ref{A6} and \ref{A7} 
\end{itemize}
hold. Then,  there exists a unique solution, $(u, \theta)\in (C^\infty(\Tt^N))^d\times  (C^\infty(\Tt^N))^d$, of \eqref{obstacleepmfg1sys}-\eqref{obstacleepmfg2sys} with $\theta ^i\geq \theta_0>0$ for some constant $\theta_0$ that does not depend on $\epsilon$.  
\end{teo}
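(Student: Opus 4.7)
The plan is to construct a solution via a vanishing-viscosity regularization combined with a fixed-point argument, and to obtain uniqueness by a Lasry--Lions type monotonicity computation. For each $\delta > 0$, I consider the regularized system obtained by adding $-\delta \Delta u^i$ to the left-hand side of \eqref{obstacleepmfg1sys} and $-\delta \Delta \theta^i$ to the left-hand side of \eqref{obstacleepmfg2sys}. I define a map $T : (\bar u, \bar \theta) \mapsto (u, \theta)$ on a convex set in $(C^{2,\alpha}(\Tt^N))^d \times (C^{2,\alpha}(\Tt^N))^d$ by first solving the decoupled uniformly elliptic HJ-type equations with the penalty written in terms of $\bar u^j$ for $j \ne i$ and right-hand side $g(\bar\theta^i)$ (the zeroth-order term $u^i$ together with the monotonicity of $\beta_\ep$ yields a unique classical solution by comparison), and then solving the resulting linear uniformly elliptic equations for $\theta^i$, whose solution is strictly positive because the source $1$ is. Compactness of $T$ is standard Schauder regularity and invariance follows from the a priori bounds discussed below, so Schauder (or Leray--Schauder) yields a fixed point. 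Bootstrapping using the smoothness of $H^i$, $g$, $\psi^{ij}$, and $\beta_\ep$ then upgrades this fixed point to $(C^\infty(\Tt^N))^d \times (C^\infty(\Tt^N))^d$.

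The estimates uniform in $\delta$ (and, crucially, in $\ep$) are the heart of the argument. Because $u^i$ appears linearly in \eqref{obstacleepmfg1sys}, I expect an $L^\infty$ bound on $u$ in terms of $\|g(\theta)\|_\infty$, followed by a Bernstein-type computation that exploits the monotonicity of $\beta_\ep$ to produce a uniform $W^{1,\infty}$ bound. For the density, summing \eqref{obstacleepmfg2sys} over $i$ and relabeling $i \leftrightarrow j$ in the outflow terms makes the penalty contributions cancel pairwise, giving
\beq
\sum_i\bigl(-\di(D_p H^i(Du^i,x)\theta^i) + \theta^i\bigr) = d,
\eeq
so in particular $\int_{\Tt^N}\sum_i \theta^i\,dx = d$. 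Combining this with Assumptions \ref{A5}--\ref{A7} (in either the \textbf{L} or the \textbf{P-$\frac{2}{N}$} case) and standard adjoint-multiplication arguments yields uniform $W^{1,2}$ and $L^\infty$ bounds on $\theta$. The positive lower bound $\theta^i \geq \theta_0$, uniform in $\ep$, follows by rewriting each transport equation as
\beqs
-\delta \Delta\theta^i -\di(D_p H^i(Du^i,x)\theta^i) + \Bigl(1+\sum_{j\ne i}\beta'_\ep(u^i-u^j-\psi^{ij})\Bigr)\theta^i = 1 + \sum_{j\ne i}\beta'_\ep(u^j-u^i-\psi^{ji})\theta^j,
\eeqs
where the inflow on the right is bounded below by $1$, and the pairwise cancellation above prevents the absorption coefficient from dominating.

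Passing to the limit $\delta \to 0$ is then a compactness argument based on the uniform estimates, yielding a classical solution of the unregularized penalized system. For uniqueness, given two solutions $(u_1,\theta_1)$ and $(u_2,\theta_2)$, I take the difference of the HJ equations tested against $\theta_1^i-\theta_2^i$, subtract the difference of the transport equations tested against $u_1^i-u_2^i$, sum over $i$, and integrate by parts. The Hamiltonian terms combine via the convexity of $H^i(\cdot,x)$, and the penalty terms combine via the monotonicity of $\beta_\ep$, to yield nonpositive contributions that compensate a nonnegative coupling term
\beqs
\sum_i \int_{\Tt^N}\bigl(g(\theta_1^i)-g(\theta_2^i)\bigr)\bigl(\theta_1^i-\theta_2^i\bigr)\,dx;
\eeqs
strict monotonicity of $g$ and strict convexity of the $H^i$ then force $\theta_1 = \theta_2$ and $Du_1^i = Du_2^i$, whence $u_1 = u_2$ from the HJ equation.

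The main obstacle is the $\ep$-independent lower bound on $\theta^i$. As $\ep \to 0$, the coefficient $\sum_{j\ne i}\beta'_\ep(u^i-u^j-\psi^{ij})$ in front of $\theta^i$ can blow up wherever $u^i-u^j>\psi^{ij}$, and a naive term-by-term comparison would only give $\theta^i \gtrsim 1/(\text{large})$. The rescue must exploit the pairwise cancellation of outflow and inflow terms together with the upper bound on $\theta^j$ and the strictly positive source; making this balance quantitative and uniform in $\ep$ is delicate. A secondary difficulty arises when $g$ is singular at zero (as in $g(\theta)=\ln\theta$ from \eqref{HHYP}), because the $L^\infty$ bound on $u$ then requires the lower bound on $\theta$, which in turn depends on controls that involve $u$; the estimates must be closed in the correct order.
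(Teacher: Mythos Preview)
Your uniqueness argument is essentially the paper's. For existence the paper takes a different route: rather than vanishing viscosity plus a fixed point, it uses the continuation method along the family $H^i_\lambda := \lambda H^i + (1-\lambda)\tfrac{|p|^2}{2}$. At $\lambda=0$ the system has the explicit solution $(u^i,\theta^i)=(g(1),1)$; the set of admissible $\lambda$ is closed by the uniform $C^k$ bounds of Section~\ref{lipbounds} and open by the implicit function theorem, once one checks that the linearized operator is an isomorphism between suitable Sobolev scales. Your scheme could perhaps be made to work as well, but the continuation method avoids having to prove invariance of a convex set under a frozen-coefficient map.

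The genuine gap is precisely the one you flag: the $\ep$-uniform lower bound $\theta^i\ge\theta_0$. Your transport-equation rewrite cannot deliver it, because the absorption coefficient $1+\sum_{j\ne i}\beta'_\ep(u^i-u^j-\psi^{ij})$ may blow up as $\ep\to 0$, and the pairwise cancellation you invoke occurs only after summing over $i$; it gives integral control of $\sum_i\theta^i$, not pointwise control of any single $\theta^i$. The paper obtains the lower bound from the \emph{Hamilton--Jacobi} side instead. In the \textbf{L} case the circularity you identify is broken by the elementary inequality $\ln\theta\le \theta^{1/p}+C_p$: inserted into \eqref{obstacleepmfg1sys} together with $H^i,\beta_\ep\ge 0$ it gives $|Du^i|^{2p}\le C\theta^i + C|u^i-\intor u^i|^p + C_p$, so Poincar\'e and $\intor\theta^i\le d$ yield $u\in W^{1,p}$ for every $p$ \emph{before} any lower bound on $\theta$; the resulting $\|u\|_\infty$ bound then reads off $\ln\theta^i\ge u^i\ge -C$ directly from \eqref{obstacleepmfg1sys} (Proposition~\ref{prolog}). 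In the \textbf{P-$\frac 2 N$} case one evaluates both equations at a point $x_0$ realizing $\min_{j,x} u^j(x)$, say in component $i$: there the outgoing penalties $\beta_\ep(u^i-u^j-\psi^{ij})$ and $\beta'_\ep(u^i-u^j-\psi^{ij})$ vanish, and combining the two equations at $x_0$ with Assumption~\ref{A6} and $D^2u^i(x_0)\ge 0$ yields $\theta^i(x_0)\ge 1$, whence $u^j(x)\ge u^i(x_0)\ge 1-H^i(0,x_0)$ and finally $g(\theta^j)\ge 1-\max_{k,y} H^k(0,y)>0$ from \eqref{obstacleepmfg1sys} again (Proposition~\ref{MEst}). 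This HJ-side mechanism is what your proposal is missing.
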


To prove  Theorem \ref{main1},  we establish the existence of solutions for \eqref{obstacleepmfg1sys}-\eqref{obstacleepmfg2sys} in  Theorem \ref{main2} and prove $\epsilon$ independent bounds. The analysis
of \eqref{obstacleepmfg1sys}-\eqref{obstacleepmfg2sys}
begins in Section \ref{apest} where we examine various a priori estimates.
Next, in Section \ref{fapest}, we consider separately the two different nonlinearities,  $g(m)=\ln m$ and $g(m)=m^\alpha$.  In these two sections, our estimates are uniform
in $\epsilon$. 
In contrast, in Section \ref{lipbounds}, we prove $L^\infty$ estimates for
$\theta$ and Lipschitz bounds for $u$ that depend on $\epsilon$. These are
crucial 
in the proof of Theorem \ref{main2} that we present in Section \ref{pmain2}.
This proof combines the a priori estimates with the continuation method.
The paper ends with the proof of Theorem \ref{main1} in Section \ref{pmain1} and a brief discussion of convergence and uniqueness in Section \ref{secunlim}. 


\section{Main assumptions }
\label{opmfg}



We begin by discussing the assumptions on  $H^i$, $g,$ and $\psi$ used in the study of  \eqref{obstacleepmfg1}-\eqref{obstacleepmfg2}.
On the Hamiltonian, $H^i$, we assume standard hypotheses that hold in a large class of problems. In particular, they are satisfied 
by the example \eqref{HHYP}.  
To simplify the presentation, 
we select assumptions
 compatible with quadratic growth of the Hamiltonian, see Remark \ref{R2} below.  
 Regarding the dependence on the measure: for every coordinate, $i$, we have the same nonlinearity, $g$,  evaluated at the 
 coordinate $\theta^i$. Some of our estimates are valid without substantial changes in the corresponding proofs if $g$ is replaced by a function, $g^i$, 
 depending on all coordinates of $\theta$ or even on $x$. Naturally,  Assumption \ref{A2} must be modified in a suitable way. Finally, we work with positive switching costs, $\psi^{ij}$. The positivity  condition is natural in
 optimal switching because it prevents the occurrence of infinitely many switches. These conditions and the assumptions that follow are unlikely to give
 the most general case under which our techniques hold. Our choice reflects a balance between generality and
 simplicity of the proofs. 

\begin{hyp}
\label{A1} The Hamiltonian, $H^i$, the nonlinearity, $g,$ and the switching cost, $\psi^{ij}$, satisfy:
\begin{enumerate}
\item 
For $1\leq i\leq d$,
 $H^i:\Tt^N\times\R^N\to\R$ is $C^\infty$ and positive. 
\item 
 $g:\R^+\rightarrow\R$ is  $C^\infty$  and strictly increasing; that is,   $g'>0$.
 \item For $1\leq i, j\leq d$, the function 
    $\psi^{ij}:\Tt^N\to \Rr$ is of class
 $C^\infty(\Tt^N)$. Furthermore, for  $x\in \Tt^N$,     
     $\psi^{ij}(x)>0$. 
\end{enumerate}
\end{hyp}

As usual, we identify whenever convenient, functions in $\Tt^N$\ as $\Z^N$-periodic functions in  $R^N$.

\begin{hyp}
\label{A2} The function $g$ satisfies
the following.
\begin{enumerate}
\item For any $C_0>0,$ there exists $C_1$ such that 
\beq\label{gconvexass} \intor \theta g(\theta)dx\geq -C_1\eeq
for any non-negative $\theta\in L^1(\Tt^N)$ with $\intor\theta dx\leq C_0.$
\item There exists $C>0$ such that, for any $\theta>0$,
\beq\label{ggrowth}g(\theta)\leq \frac{1}{2}\theta g(\theta)+C.\eeq
\end{enumerate}
\end{hyp}

\begin{rem}
The functions $g(\theta)=\ln \theta$ and $g(\theta)=\theta^\alpha$, for $\alpha>0$, satisfy the preceding assumption. 
\end{rem}

\begin{hyp}
\label{A3}
There exist constants, $c, C\geq 0,$ such that 
\beq\label{dphp}
H^i(p,x)-D_pH^i(p,x)\cdot p\le -cH^i(p,x)+C
\eeq
for all $p\in\R^N$, $x\in \Tt^N$, and $1\leq i\leq d$. 
\end{hyp}

\begin{rem}
Consider the Lagrangian, $L^i$, associated with the Hamiltonian $H^i$ given by
 \[
L^i(x,v)=\sup_{p\in \Rr^N} -p\cdot v-H^i(p,x).  \]
Because the supremum is achieved for $v=-D_pH^i(p,x)$,
\[
L^i(x,v)=D_pH^i (p,x)\cdot p -H^i(p,x).
\]
Accordingly, the preceding hypothesis gives a lower bound on $L^i$. 
\end{rem}

\begin{hyp}
\label{A4}
There exists $\gamma>0$ such that 
\beq\label{Hconvexity} H^i_{p_kp_j}(p,x)\xi_k\xi_j\ge \gamma|\xi |^2\eeq
for all $x\in \Tt^N$ and $p,\,\xi\in\R^N$.

There exist   $C,\,c>0$ such that 
 \beq\label{growthH}\begin{split}
 &|D^2_{pp}H^i|\le C, \\ &|D^2_{xp}H^i|\le C(1+|p|),\\& |D^2_{xx}H^i|\le C(1+|p|^2).\end{split}
\eeq
\end{hyp}

\begin{rem}
\label{R2}
The preceding assumption implies that there exists $C>0$ such that 
\beq\label{Hquadratic}\frac{\gamma}{2}|p|^2 -C\le H^i(p,x)\le C |p|^2 +C,
\eeq and 
\beq\label{DpHsulinear}\begin{split}& |D_pH^i(p,x)|\le C(1+|p|),\\&
|D_xH^i(p,x)|\le C(1+|p|^2)\end{split}
\eeq
 for all $p\in \R^N$ and $x\in \Tt^N$. 
\end{rem}


\begin{hyp}
\label{A5}
There exist constants, $C,\widetilde{C}>0,$ and $\alpha\geq 0$ 
such that 
\beq
\label{g'prop}C\theta^{\al-1}\leq g'(\theta)\leq \widetilde{C}\theta^{\al-1}+\widetilde{C}\eeq
for any $\theta\ge 0$. Two specific cases of interest are
\begin{itemize}
\item[{\bf L} - ] $g(\theta)=\ln \theta$;

\item[{\bf P} - ] $g(\theta)=\theta^\alpha$, $0\leq\alpha \leq 1$. 

\end{itemize}

In the {\bf P} case, the additional constraint, $\alpha<\frac 2 N$, is denoted by {\bf P-$\frac 2 N$}. 

\end{hyp}

The next two assumptions are of a technical nature and are used in the study of the {\bf P-$\frac 2 N$}  case.
Assumption \ref{A6} is employed in Proposition \ref{MEst} 
 to obtain a lower bound for $\theta^i$. Assumption \ref{A7} is fundamental in the proof of Proposition \ref{liplem}.

\begin{hyp}
\label{A6}
For $1\leq i\leq d$, we have
\beq\label{bgradcondpower}D_{px}^2H^i(0,x)=D_xH^i(0,x)D_pH^i(0,x)=0 \quad\text{for any }x\in\Tt^N \eeq
and
\beq\label{blipconprop} \max_{x\in\Tt^N}H^i(0,x)<1.\eeq
\end{hyp}

\begin{rem}
The preceding assumption is used to prove lower bounds for $\theta^i$. Because $H^i\geq 0$, the bound \eqref{blipconprop} gives an oscillation condition for $H^i(0,x)$. This oscillation condition is natural in light of the example considered in \cite{GPV}, Chapter 3.  In that reference and also in \cite{GNP}, various examples of first-order MFGs are  shown to have a vanishing density. The oscillation of $H(0,x)$ plays an essential role
in these examples. 
\end{rem}

\begin{rem}
The number $1$ on the right-hand side of \eqref{blipconprop} corresponds to the source term in the Fokker-Planck equation \eqref{obstacleepmfg2sys}. 
Suppose that we modify   \eqref{obstacleepmfg2sys} and consider a source,
$\upsilon>0$; that is,
\[
-\di (D_p H^i(Du^i,x)\theta^i)+\theta^i+\sum_{j\ne i}\beta'_\ep(u^i-u^j-\psi^{ij}) \theta^i -\beta'_\ep(u^j-u^i-\psi^{ji}) \theta^j=\upsilon.  
\]
Then, \eqref{blipconprop} becomes
\[
\max_{x\in\Tt^N}H^i(0,x)<\upsilon. 
\]
\end{rem}

\begin{hyp}
\label{A7}
The value $\alpha$ in Assumption \ref{A5}  satisfies $\alpha\in[0,\alpha_0)$, where $\alpha_0$ solves\begin{equation}
\label{lipsalphaassmp}
2\alpha_0=(\alpha_0+1)\beta (\beta-1), \qquad \text{with} \quad \beta=\sqrt{\frac{2^*}{2}},
\end{equation}
if $N>2$, and $\alpha_0=\infty$ if $N\leq 2$.
\end{hyp}

\begin{rem}
In the $N\leq 3$ case, the value $\alpha_0$ determined by \eqref{lipsalphaassmp} is larger than $\frac 2 N$. Whereas, if $N>3$ the opposite inequality holds. 
\end{rem}



Our last assumption is required in the study of the penalized problem. For $\epsilon>0$, we choose a penalty, $\beta_\ep,$ satisfying the following assumption.

\begin{hyp}
\label{B1}
$\beta_\ep:\R\rightarrow\R$, smooth, with  $\beta_\ep'\geq 0$, 
$\beta_\ep''\geq 0$  with 
\beq\label{beta1}\beta_\ep(s)=0\quad \text{for } s\le 0,\quad\beta_\ep'(s)\leq C\beta_\epsilon''(s)\quad \text{for }s> 0,\eeq
and
$\beta_\epsilon(s)\to \infty$ as $\epsilon\to 0$  for $s>0$.
\end{hyp}

\begin{rem}
\label{whatever}
From the preceding assumption, we get 
 \beq\label{beta2}
\beta_\ep(s)-s\beta_\ep'(s)\leq 0\quad \text{for }s\in\R.\eeq
\end{rem}

The preceding assumption is standard in the setting of variational inequalities and optimal switching. In the context of MFGs,
a similar penalty was used in \cite{GPat} to study the obstacle problem.


\section{A priori estimates}
\label{apest}

Here, we prove a priori estimates for classical solutions of \eqref{obstacleepmfg1sys}-\eqref{obstacleepmfg2sys}. The purpose of these estimates is twofold: first, to obtain the existence of solutions; 
second, to take the limit $\epsilon\to 0$. For that, we seek to prove bounds that are uniform in $\epsilon$. 
We begin with a simple consequence of the maximum principle for weakly coupled systems.
\begin{pro}
        Suppose that Assumptions \ref{A1} and \ref{B1}  hold. 
        Let $(u,\theta)$ be a $C^\infty$ solution of \eqref{obstacleepmfg1sys}-\eqref{obstacleepmfg2sys}.
        Then,  for $i=1,\ldots,d$, we have $\theta^i\geq 0$.
\end{pro}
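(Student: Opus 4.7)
The plan is to deduce $\theta^i\ge 0$ by a duality argument against the linearization of the Hamilton--Jacobi operator from \eqref{obstacleepmfg1sys}, which satisfies a weakly-coupled minimum principle. A direct minimum principle for $\theta$ itself does not quite close, because at a hypothetical negative minimum $\bar x$ of $\theta^{i_0}$ the transport term contributes $-\theta^{i_0}(\bar x)\,\di(D_pH^{i_0})(\bar x)$, whose sign is not controlled by the hypotheses; this is why one has to pass to the adjoint.

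First I would write \eqref{obstacleepmfg2sys} abstractly as $\mathcal{L}\theta=\mathbf{1}$ and compute the formal adjoint by integrating by parts on $\Tt^N$ and reindexing the coupling sums. This gives
\[
(\mathcal{L}^*v)^i = D_pH^i(Du^i,x)\cdot\nabla v^i + v^i + \sum_{j\ne i}\beta'_\ep(u^i-u^j-\psi^{ij})(v^i-v^j),
\]
which one recognizes as the linearization of the left-hand side of \eqref{obstacleepmfg1sys}. Next I would establish a weakly-coupled minimum principle for $\mathcal{L}^*$: if $\mathcal{L}^*v\ge 0$ componentwise, then $v\ge 0$. The argument is straightforward: at a joint minimum $(i_0,\bar x)$ of the components of $v$, one has $\nabla v^{i_0}(\bar x)=0$ and $v^{i_0}(\bar x)-v^j(\bar x)\le 0$ for every $j$; since $\beta'_\ep\ge 0$ by Assumption \ref{B1}, the coupling sum is nonpositive, so $(\mathcal{L}^*v)^{i_0}(\bar x)\le v^{i_0}(\bar x)$, and the hypothesis forces $v\ge 0$ everywhere.

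The hardest step will be to show that $\mathcal{L}^*$ is surjective on $(C^\infty(\Tt^N))^d$, so that for every smooth nonnegative $\phi$ there exists a smooth $v$ solving $\mathcal{L}^*v=\phi$; such $v$ is then automatically nonnegative by the minimum principle. Injectivity comes for free from the minimum principle, and existence should follow from a Fredholm-type argument on the compact torus, using the coercive zeroth-order term $+v^i$ and smoothness of the coefficients, or alternatively from a direct fixed-point construction along the characteristics $\dot X=D_pH^i(Du^i(X),X)$ exploiting the damping provided by $+v^i$.

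With such $v\ge 0$ available, the duality identity obtained by multiplying \eqref{obstacleepmfg2sys} by $v^i$, summing in $i$, and integrating by parts gives
\[
\sum_i\int_{\Tt^N}\theta^i\phi^i\,dx = \sum_i\int_{\Tt^N}\theta^i(\mathcal{L}^*v)^i\,dx = \sum_i\int_{\Tt^N}v^i(\mathcal{L}\theta)^i\,dx = \sum_i\int_{\Tt^N}v^i\,dx \ge 0
\]
for every nonnegative smooth $\phi$. Choosing $\phi$ supported in a single component $i_0$ and concentrated around an arbitrary point $x_0$, one concludes $\theta^{i_0}(x_0)\ge 0$, which is the claim.
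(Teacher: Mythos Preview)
Your proposal is correct. The paper itself gives essentially no argument here: it writes only that the claim ``is a straightforward application of the maximum principle to \eqref{obstacleepmfg2sys}'' and cites \cite{CGT2}, so there is nothing substantive to compare against.

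Your observation that a \emph{direct} minimum principle for $\theta$ on \eqref{obstacleepmfg2sys} does not close under Assumptions \ref{A1} and \ref{B1} alone is well taken: expanding the divergence produces the term $-\theta^{i_0}(\bar x)\,\di\bigl(D_pH^{i_0}(Du^{i_0},x)\bigr)$, and nothing in those two assumptions fixes the sign of $\di(D_pH^{i_0})$ at $\bar x$. Passing to the adjoint $\mathcal{L}^*$, which is exactly the linearization of the first-order operator in \eqref{obstacleepmfg1sys}, is the standard remedy for Fokker--Planck--type equations in divergence form, and your minimum principle for $\mathcal{L}^*$ is clean and correct. The duality computation you wrote is also correct after the reindexing $\sum_{i}\sum_{j\ne i}\beta'_\ep(u^j-u^i-\psi^{ji})\theta^j v^i=\sum_{i}\sum_{j\ne i}\beta'_\ep(u^i-u^j-\psi^{ij})\theta^i v^j$. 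This route is very likely what the cited reference actually does, so in spirit your argument and the paper's pointer coincide.

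One small comment on the surjectivity step: the phrase ``Fredholm-type argument'' is not quite the right tool for a genuinely first-order operator (no compact resolvent). Your alternative via characteristics is the honest route: the flow $\dot X=D_pH^i(Du^i(X),X)$ is globally defined on $\Tt^N$ since the field is smooth and bounded, and the damping $+v^i$ lets you write $v^i(x)=\int_{-\infty}^{0}e^{s}\bigl[\phi^i-\sum_{j\ne i}\beta'_\ep(\cdot)(v^i-v^j)\bigr](X^i_x(s))\,ds$; the off-diagonal coupling is then handled either by a contraction/fixed-point iteration in $L^\infty$ (the zeroth-order matrix has row sums equal to $1$, which yields the needed contraction) or, equivalently, by adding a vanishing viscosity $-\delta\Delta v^i$, solving the resulting cooperative elliptic system, and passing to the limit using the uniform bound $\|v\|_\infty\le\|\phi\|_\infty$ furnished by the same minimum principle. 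Either way the step is routine once stated precisely.
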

\begin{proof}
The proof of this Lemma is a straightforward  application of the maximum principle to \eqref{obstacleepmfg2sys}, see \cite{CGT2} for a similar proof. 
\end{proof}

As is standard in MFG problems, we can get several  estimates  by multiplying \eqref{obstacleepmfg1sys}-\eqref{obstacleepmfg2sys}
by $1$, $\theta^i$ or $u^i$, adding or subtracting, and integrating by parts. We record these in the next lemma. 

\begin{lem}\label{estimate1lem}
        Suppose that Assumptions \ref{A1}-\ref{A3}  and \ref{B1} hold. 
        Let $(u,\theta)$ be a $C^\infty$ solution of \eqref{obstacleepmfg1sys}-\eqref{obstacleepmfg2sys}.
        Then,  there exists a constant, $C,$ that does not depend on the particular solution nor on  $\epsilon$, such that,
        for $i=1\hdots d$,
\begin{equation}
\label{btl1}
0\leq \intor \theta^idx\leq C, 
\end{equation}
\begin{equation}
\label{btl2}
\left|\intor \theta^ig(\theta^i)dx\right| \leq C,
\end{equation}
\begin{equation}
\label{btl3}
\left|\intor u^idx\right|\leq C,
\end{equation}
\begin{equation}
\label{btl4}
\intor |Du^i|^2 \theta^i dx\leq C, 
\end{equation}
\begin{equation}
\label{btl45}
\sum_{i,j=1, i\neq j}^d \intor \beta'_\ep(u^i-u^j-\psi^{ij})  \psi^{ij}\theta^idx \leq C, 
\end{equation}
and
\begin{equation}
\label{btl5}
\intor  |Du^i|^2 dx\leq C.
\end{equation}
\end{lem}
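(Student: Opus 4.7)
The proof follows the standard MFG energy recipe: test \eqref{obstacleepmfg1sys} against $\theta^i$, test \eqref{obstacleepmfg2sys} against $u^i$, integrate by parts (using periodicity), subtract, and sum over $i$; the only subtlety is the careful treatment of the coupled penalty terms.

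For \eqref{btl1}, I would integrate \eqref{obstacleepmfg2sys} over $\Tt^N$: the divergence vanishes by periodicity, and when we sum over $i$ the off-diagonal switching terms $\sum_{i\ne j}\int[\beta'_\ep(u^i-u^j-\psi^{ij})\theta^i - \beta'_\ep(u^j-u^i-\psi^{ji})\theta^j]\,dx$ cancel under the relabeling $(i,j)\leftrightarrow(j,i)$, yielding $\sum_i\int\theta^i\,dx=d$. Together with $\theta^i\ge 0$ this gives \eqref{btl1}. For the main identity, after subtracting and summing, the cross $\beta'_\ep$ piece $\beta'_\ep(u^j-u^i-\psi^{ji})\theta^j u^i$ becomes, upon relabeling $(i,j)\leftrightarrow(j,i)$, $\beta'_\ep(s^{ij})\theta^i u^j$ with $s^{ij}=u^i-u^j-\psi^{ij}$. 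The $\beta$-contribution then consolidates into
\[
\sum_{i\ne j}\int\bigl[\beta_\ep(s^{ij})-(u^i-u^j)\beta'_\ep(s^{ij})\bigr]\theta^i\,dx.
\]
Writing $u^i-u^j=s^{ij}+\psi^{ij}$ and invoking \eqref{beta2} from Remark~\ref{whatever} bounds the bracket from above by $-\psi^{ij}\beta'_\ep(s^{ij})\le 0$. Applying Assumption~\ref{A3} to estimate $H^i-D_pH^i\cdot Du^i\le -cH^i+C$ and using \eqref{btl1} to absorb the $C\int\theta^i$ term, this yields
\[
\sum_i\int g(\theta^i)\theta^i + c\sum_i\int H^i\theta^i + \sum_{i\ne j}\int \psi^{ij}\beta'_\ep(s^{ij})\theta^i\,dx \le \sum_i\int u^i\,dx + C.
\]
Next, integrating \eqref{obstacleepmfg1sys} against $1$ and using $H^i,\beta_\ep\ge 0$ gives $\int u^i\le\int g(\theta^i)$, which by \eqref{ggrowth} is at most $\tfrac12\int\theta^i g(\theta^i)+C$. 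Substituting absorbs half of $\sum_i\int\theta^i g(\theta^i)$ on the right, while \eqref{gconvexass} bounds the remaining half from below by $-C$. This yields, in aggregate form, \eqref{btl2}, \eqref{btl4} (via the quadratic lower bound \eqref{Hquadratic}), and \eqref{btl45}; per-$i$ statements follow from non-negativity of each summand together with the universal lower bound of \eqref{gconvexass}.

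The upper part of \eqref{btl3} is immediate from $\int u^i\le\tfrac12\int\theta^i g(\theta^i)+C\le C$. For the lower part, I would rearrange the master identity as $\sum_i\int u^i = \sum_i\int g(\theta^i)\theta^i + \sum_i\int L^i\theta^i + (\text{nonneg.\ $\beta$ contribution})$, where $L^i=D_pH^i\cdot Du^i-H^i\ge cH^i-C\ge -C$ by Assumption~\ref{A3}; this gives $\sum_i\int u^i\ge -C$, and combining with the per-$i$ upper bound just established produces the per-$i$ lower bound $\int u^i\ge -C$. Finally, integrating \eqref{obstacleepmfg1sys} once more, $\int H^i\le\int g(\theta^i)-\int u^i\le C$ by \eqref{btl2}--\eqref{btl3}, and \eqref{Hquadratic} upgrades this to \eqref{btl5}. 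The principal bookkeeping obstacle is the correct swap-and-combine of the coupled $\beta$ terms and the invocation of \eqref{beta2} to produce the non-positive bracket; with that identity in hand the rest of the proof is essentially linear in the hypotheses.
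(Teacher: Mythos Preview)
Your proof is correct and follows essentially the same route as the paper: integrate \eqref{obstacleepmfg2sys} and sum to get \eqref{btl1}; test \eqref{obstacleepmfg1sys} against $\theta^i$ and \eqref{obstacleepmfg2sys} against $u^i$, subtract, sum, relabel the cross $\beta'_\ep$ term, and invoke \eqref{beta2} to make the penalty contribution non-positive; then feed in Assumption~\ref{A3}, the bound $\int u^i\le\int g(\theta^i)$, and \eqref{ggrowth}/\eqref{gconvexass} exactly as you describe. Your derivation of the lower bound in \eqref{btl3} (rearranging the identity and using $L^i\ge -C$) is slightly more explicit than the paper's, but the argument is the same.
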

\begin{proof}
By 
summing over $i$ the equations in \eqref{obstacleepmfg2sys}, 
we gather that
$$
\sum_{i=1}^d-\di (D_p H^i(Du^i,x)\theta^i)+\theta^i=d.
$$
Hence, integrating on $\Tt^N$, we get
\[
 \intor \theta^idx\le\sum_{i=1}^d\intor \theta^idx=d 
\]
 for any  $i=1,...,d$. Thus, \eqref{btl1} holds. 
Due to Assumptions \ref{A1} and \ref{B1},    $H^i$ and $\beta_\ep$  are non-negative. Consequently, 
 we infer that
 \begin{equation}
 \label{XYZ}
 \intor u^idx\leq  \intor g(\theta^i)dx.
 \end{equation}
 Next, we multiply \eqref{obstacleepmfg1sys} by $\theta^i$, sum over $i,$ and integrate. Accordingly,  we gather
the identity \beq\label{firsteqthetaint}
 \begin{split}
& \sum_{i=1}^d\intor H^i(Du^i,x)\theta^i+ u^i\theta^i+\sum_{j\ne i}\beta_\ep(u^i-u^j-\psi^{ij})\theta^idx\\&\qquad= \sum_{i=1}^d\intor \theta^ig(\theta^i)dx.
 \end{split}
 \eeq
 Next, we  multiply \eqref{obstacleepmfg2sys} by $u^i$, add over $i,$ and integrate by parts
to conclude that  \beq\label{secondequiinteg}
 \begin{split}
 &\sum_{i=1}^d\intor \Big[ D_pH^i(Du^i,x)\cdot Du^i\theta^i+ u^i\theta^i\\&\quad +\sum_{j\ne i}\beta'_\ep(u^i-u^j-\psi^{ij}) \theta^i u^i
 -\beta'_\ep(u^j-u^i-\psi^{ji}) \theta^j u^i\Big]dx\\&\qquad =\sum_{i=1}^d\intor u^idx.\end{split}
 \eeq
 Subtracting equations \eqref{firsteqthetaint} and \eqref{secondequiinteg}, we get
  \beqs
  \begin{split}& \sum_{i=1}^d\intor \theta^ig(\theta^i)dx\\&
  =\sum_{i=1}^d\intor H^i(Du^i,x)\theta^i+ u^i\theta^i+\sum_{j\ne i}\beta_\ep(u^i-u^j-\psi^{ij})\theta^idx\\&
  =\sum_{i=1}^d\intor (H^i(Du^i,x)-D_p H^i(Du^i,x)\cdot Du^i)\theta^i+u^idx\\&
  +\sum_{i,j=1, i\neq j}^d\intor\beta_\ep(u^i-u^j-\psi^{ij})\theta^idx\\&
+  \sum_{i,j=1, i\neq j}^d\intor -\beta'_\ep(u^i-u^j-\psi^{ij}) \theta^i u^i+\beta'_\ep(u^j-u^i-\psi^{ji}) \theta^j u^idx.
 \end{split}
 \eeqs
According to Assumption \ref{A3}, we have
 \beqs
  \begin{split}
 &\sum_{i=1}^d\intor (H^i(Du^i,x)-D_pH^i(Du^i,x)\cdot Du^i)\theta^i dx\\&\quad \leq -c  \sum_{i=1}^d\intor H^i(Du^i,x)\theta^idx + C 
 \end{split}
 \eeqs
 using \eqref{btl1}.
 Moreover, we have
 \beqs
  \begin{split}
&\sum_{i,j=1, i\neq j}^d\intor\beta_\ep(u^i-u^j-\psi^{ij})\theta^idx\\&
 \quad+ \sum_{i,j=1, i\neq j}^d\intor -\beta'_\ep(u^i-u^j-\psi^{ij}) \theta^i u^i+\beta'_\ep(u^j-u^i-\psi^{ji}) \theta^j u^idx\\&
 =\sum_{i,j=1, i\neq j}^d \intor[\beta_\ep(u^i-u^j-\psi^{ij})-\beta'_\ep(u^i-u^j-\psi^{ij}) (u^i-u^j-\psi^{ij})]\theta^idx\\
 &\quad -\sum_{i,j=1, i\neq j}^d \intor \beta'_\ep(u^i-u^j-\psi^{ij})  \psi^{ij}\theta^idx\\&
 \leq -\sum_{i,j=1, i\neq j}^d \intor \beta'_\ep(u^i-u^j-\psi^{ij})  \psi^{ij}\theta^idx
 \end{split}
 \eeqs
by \eqref{beta2} in Remark \ref{whatever}. 
Gathering the previous estimates, we conclude
that   \beq
  \label{GE}
  \begin{split}
&\sum_{i=1}^d\intor \theta^ig(\theta^i)+c  H^i(Du^i,x)\theta^idx+\sum_{i,j=1, i\neq j}^d \intor \beta'_\ep(u^i-u^j-\psi^{ij})  \psi^{ij}\theta^idx\\
&\quad \leq \sum_{i=1}^d \intor u^idx+C\leq \sum_{i=1}^d\intor g(\theta^i)dx + C 
\end{split}
 \eeq
 using \eqref{XYZ}.
Then, Assumption \ref{A2} implies 
$$\intor \theta^ig(\theta^i)dx\leq C.$$
On the other hand, \eqref{gconvexass} in Assumption \ref{A2} and \eqref{btl1} give
$$\intor \theta^ig(\theta^i)dx\ge -C.$$ 
Therefore,
 \eqref{btl2} holds. 
Using \eqref{btl2} and the bound \eqref{ggrowth} from Assumption \ref{A2}, we get  \eqref{btl3}. In addition,  for any  $i=1,...,d$,
\beqs \intor H^i(Du^i,x)\theta^idx\le C.
\eeqs 
The last estimate combined with \eqref{Hconvexity} implies \eqref{btl4}.  A similar argument yields  \eqref{btl45}.

Finally,
the bound \eqref{btl5} follows
from   \eqref{obstacleepmfg1sys}  by combining  \eqref{Hconvexity}, the non-negativity of $\beta_\ep$,  and the previous results
with the estimate
\beqs
\intor  |Du^i|^2 dx\leq C+\intor (g(\theta^i)-u^i)dx\leq C.
\eeqs
\end{proof}

\begin{lem}\label{estimateslem2}
        Suppose that Assumptions \ref{A1}-\ref{A5}  and \ref{B1} hold. 
        Let $(u,\theta)$ be a $C^\infty$ solution of \eqref{obstacleepmfg1sys}-\eqref{obstacleepmfg2sys}.
        Then,  there exists a constant, $C,$ that does not depend on the particular solution nor  on $\epsilon$, such that,  for $i=1,\ldots, d$,
\beq\label{d2utheta}\intor |D^2u^i|^2\theta_idx\leq C,\eeq
\beq\label{g'thetadth}\intor g'(\theta^i)|D\theta^i|^2dx\leq C,\eeq and 
\beq
\label{w12alpthet}\|(\theta^i)^\frac{\alpha+1}{2}\|_{W^{1,2}(\Tt^N)}\leq C.
\eeq    
\end{lem}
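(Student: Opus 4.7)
The plan is to apply $\Delta$ to equation \eqref{obstacleepmfg1sys}, multiply by $\theta^i$, integrate over $\Tt^N$, and integrate by parts using the Fokker-Planck equation \eqref{obstacleepmfg2sys} as the adjoint of the linearization of the HJ equation. This single computation will produce both \eqref{d2utheta} and \eqref{g'thetadth}; the bound \eqref{w12alpthet} then follows by an algebraic manipulation.

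First, the chain rule gives
\beqs
\Delta H^i(Du^i,x) = \tr\bigl(D^2_{pp}H^i(D^2u^i)^2\bigr) + 2\langle D^2_{xp}H^i, D^2u^i\rangle + D_pH^i\cdot D(\Delta u^i) + \tr D^2_{xx}H^i,
\eeqs
together with the analogous expansions $\Delta\beta_\ep(u^i-u^j-\psi^{ij}) = \beta''_\ep|D(u^i-u^j-\psi^{ij})|^2 + \beta'_\ep\Delta(u^i-u^j-\psi^{ij})$ and $\Delta g(\theta^i)=g'(\theta^i)\Delta\theta^i+g''(\theta^i)|D\theta^i|^2$. After multiplying the resulting identity by $\theta^i$ and integrating, two integrations by parts convert $\intor \Delta g(\theta^i)\theta^i\,dx$ into $-\intor g'(\theta^i)|D\theta^i|^2\,dx$, the quantity we aim to control. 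For the third-order term in $u^i$, I integrate by parts once to get $\intor D_pH^i\cdot D(\Delta u^i)\theta^i\,dx = -\intor \Delta u^i\,\div(D_pH^i\theta^i)\,dx$ and rewrite $\div(D_pH^i\theta^i)$ using \eqref{obstacleepmfg2sys}: the $\intor\Delta u^i\,dx$ piece vanishes by periodicity, while the $\intor\Delta u^i\theta^i\,dx$ piece cancels the contribution coming from differentiating the $u^i$-term in \eqref{obstacleepmfg1sys}.

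The coupling terms require care. After summing over $i$ and reindexing $i\leftrightarrow j$, the $\beta'_\ep\Delta u^i$ and $\beta'_\ep\Delta u^j$ pieces from $\Delta\beta_\ep$ combine with the $\beta'_\ep\Delta u^i$ pieces extracted from $\div(D_pH^i\theta^i)$ to leave only the residue $\sum_{i\neq j}\intor \beta'_\ep(u^i-u^j-\psi^{ij})\Delta\psi^{ij}\theta^i\,dx$, which is controlled by \eqref{btl45} and the smoothness of $\psi^{ij}$; the $\beta''_\ep|D(u^i-u^j-\psi^{ij})|^2\theta^i$ contributions have a favorable sign and are discarded. The $H$-derivative terms are handled by Assumption \ref{A4}: Young's inequality gives $|2\langle D^2_{xp}H^i,D^2u^i\rangle|\theta^i\le \frac{\gamma}{2}|D^2u^i|^2\theta^i + C(1+|Du^i|^2)\theta^i$, and $|\tr D^2_{xx}H^i|\theta^i\le C(1+|Du^i|^2)\theta^i$, both integrable by \eqref{btl1} and \eqref{btl4}. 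Finally, the convexity bound \eqref{Hconvexity} gives $\tr(D^2_{pp}H^i(D^2u^i)^2)\ge\gamma|D^2u^i|^2$, and absorbing the $\gamma/2$ piece on the left yields
\beqs
\frac{\gamma}{2}\sum_i\intor |D^2u^i|^2\theta^i\,dx + \sum_i\intor g'(\theta^i)|D\theta^i|^2\,dx \le C,
\eeqs
proving \eqref{d2utheta} and \eqref{g'thetadth}.

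For \eqref{w12alpthet}, Assumption \ref{A5} yields $g'(\theta)\ge C\theta^{\alpha-1}$, and the pointwise identity $\theta^{\alpha-1}|D\theta|^2=\tfrac{4}{(\alpha+1)^2}|D\theta^{(\alpha+1)/2}|^2$ combined with \eqref{g'thetadth} produces the $L^2$-bound on $D(\theta^i)^{(\alpha+1)/2}$. The matching $L^2$-bound on $(\theta^i)^{(\alpha+1)/2}$ is the estimate $\intor (\theta^i)^{\alpha+1}\,dx\le C$; for $\alpha>0$, integrating $g'\ge C\theta^{\alpha-1}$ gives $g(\theta)\ge c\theta^\alpha-C'$, so $\theta g(\theta)\gtrsim \theta^{\alpha+1}$ up to a linear term, and \eqref{btl1}--\eqref{btl2} conclude; for $\alpha=0$ the bound reduces to \eqref{btl1}. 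The main obstacle in the proof is maintaining $\ep$-independence while taming the coupling terms produced by the penalty $\beta_\ep$; this is made possible by the sign $\beta''_\ep\ge 0$ (from Assumption \ref{B1}) allowing us to discard the quadratic $\beta''_\ep$ contribution, together with the a priori bound \eqref{btl45}, which controls the only non-sign-definite residue that survives the $i\leftrightarrow j$ symmetrization.
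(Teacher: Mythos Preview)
Your proposal is correct and follows essentially the same route as the paper: apply $\Delta$ to \eqref{obstacleepmfg1sys}, multiply by $\theta^i$, sum in $i$, and use \eqref{obstacleepmfg2sys} (tested against $\Delta u^i$) to cancel the $\beta'_\ep\Delta u^j$ contributions, leaving only the $\beta'_\ep\Delta\psi^{ij}$ residue controlled by \eqref{btl45}, while $\beta''_\ep\ge 0$ disposes of the quadratic penalty term. The only cosmetic difference is in the last step \eqref{w12alpthet}: the paper obtains $\|(\theta^i)^{(\alpha+1)/2}\|_{L^2}$ via Poincar\'e together with $\theta^i\in L^1$, whereas you bound $\int(\theta^i)^{\alpha+1}$ directly from \eqref{btl2} and the lower bound on $g'$.
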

\begin{proof}
We begin by 
differentiating \eqref{obstacleepmfg1sys} twice with respect to $x_k$ and then summing over $k$. In this way, we get  
\beqs\begin{split}&D_pH^i\cdot D( \Delta u^i)+ H^i_{x_kx_k}+ 2 H^i_{x_kp_l}u^i_{x_lx_k}+H^i_{p_lp_m}u^i_{x_lx_k}u^i_{x_mx_k}+\Delta u^i\\&
+\sum_{j\ne i}\beta_\ep'(u^i-u^j-\psi^{ij})\Delta (u^i-u^j-\psi^{ij})+\beta_\ep''(u^i-u^j-\psi^{ij})|D(u^i-u^j-\psi^{ij})|^2\\
&\qquad =\Delta(g(\theta^i)).
\end{split}\eeqs
Next, 
we multiply the previous equation by $\theta^i$,  add in the index $i,$ and integrate by parts to  conclude that 
\beq
\label{pieq}
\begin{split}\sum_{i=1}^d\intor &\Delta(g(\theta^i))\theta^idx\\
=&\sum_{i=1}^d\intor (D_pH^i\cdot D(\Delta u^i)+\Delta u^i+ \sum_{j\ne i}\beta_\ep'(u^i-u^j-\psi^{ij})\Delta u^i)\theta^idx\\&
- \sum_{i=1}^d\sum_{j\ne i}\intor\beta_\ep'(u^i-u^j-\psi^{ij})\Delta (u^j+\psi^{ij})\theta^idx\\&
+\intor(  H^i_{x_kx_k}+ 2 H^i_{x_kp_l}u^i_{x_lx_k}+H^i_{p_lp_m}u^i_{x_lx_k}u^i_{x_mx_k})\theta^idx\\&
+\sum_{i=1}^d\sum_{j\ne i}\intor \beta_\ep''(u^i-u^j-\psi^{ij})|D(u^i-u^j-\psi^{ij})|^2\theta^idx.
\end{split}
\eeq
Multiplying \eqref{obstacleepmfg2sys} by $\Delta u^i$ and integrating by parts results in 
\beqs \begin{split}&\sum_{i=1}^d\intor (D_pH^i\cdot D(\Delta u^i)+\Delta u^i+ \sum_{j\ne i}\beta_\ep'(u^i-u^j-\psi^{ij})\Delta u^i)\theta^idx\\&
=\sum_{i=1}^d\sum_{j\ne i}\intor(\beta'_\ep(u^j-u^i-\psi^{ji}) \Delta u^i\theta^j+\Delta u^idx\\&
=\sum_{i=1}^d\sum_{j\ne i}\intor(\beta'_\ep(u^i-u^j-\psi^{ij}) \Delta u^j\theta^i dx. 
\end{split}
\eeqs
Using the previous identity in \eqref{pieq} gives
\beqs\begin{split}
\sum_{i=1}^d\intor \Delta(g(\theta^i))\theta^idx&
=- \sum_{i=1}^d\sum_{j\ne i}\intor\beta_\ep'(u^i-u^j-\psi^{ij})\Delta\psi^{ij}\theta^idx\\&
+\intor(H^i_{x_kx_k}+ 2 H^i_{x_kp_l}u^i_{x_lx_k}+H^i_{p_lp_m}u^i_{x_lx_k}u^i_{x_mx_k})\theta^idx\\&
+\sum_{i=1}^d\sum_{j\ne i}\intor \beta_\ep''(u^i-u^j-\psi^{ij})|D(u^i-u^j-\psi^{ij})|^2\theta^idx.\\&
\end{split}
\eeqs
Taking into account that $\Delta\psi^{ij}$
is bounded and $\psi^{ij}>0$, estimate \eqref{btl45} implies that
\[
\left|
\sum_{i, j=1, i\neq j}^d\intor\beta_\ep'(u^i-u^j-\psi^{ij})\Delta\psi^{ij}\theta^idx
\right|\leq C.
\]
Because  $\beta_\ep''\geq 0$ and because of the uniform convexity from \eqref{Hconvexity},  \eqref{growthH} and \eqref{btl4},
we obtain
 \begin{equation}
         \label{see}
         \begin{split}
        & \sum_{i=1}^d\intor g'(\theta^i)|D\theta^i|^2dx+C\sum_{i=1}^d\intor |D^2u^i|^2\theta^idx\\&\qquad \leq C \sum_{i=1}^d\intor (1+|Du^i|^2)\theta^idx\leq C.
         \end{split}
                 \end{equation}
Hence, we have \eqref{d2utheta} and \eqref{g'thetadth}.
Moreover, from \eqref{g'thetadth} and  \eqref{g'prop}, we infer that 
 \beqs\intor(\theta^i)^{\alpha-1}|D\theta^i|^2dx\leq C\intor g'(\theta^i)|D\theta^i|^2dx\leq C;\eeqs
 that is, $|D(\theta^i)^\frac{\alpha+1}{2}|\in L^2(\Tt^N)$.  By \eqref{btl1},  $\theta^i\in L^1(\Tt^N)$. Thus, the previous inequality and the Poincar\'{e} inequality imply \eqref{w12alpthet}.
\end{proof} 


\section{Further a priori estimates}
\label{fapest}

In this section, we prove additional a priori estimates for logarithmic
(Assumption \ref{A5} {\bf L}) 
 and power-like nonlinearities (Assumptions \ref{A5} {\bf P} and {\bf P-$\frac 2 N$}).
These two cases are examined separately.  
 Nevertheless, for both
  the logarithmic nonlinearity and 
 for the power case, if $\alpha<\frac 2 N$   (Assumption \ref{A5} {\bf P-$\frac 2 N$}), 
 we obtain
 similar, 
 $\ep$-independent lower bounds
 on $\theta$, on  $\|u\|_{W^{2,2}(\Tt^N)}$, and on $\|u\|_{W^{1,p}(\Tt^N)}$ for any $p\geq1$.

\subsection{The logarithmic case}

Here, we consider the logarithmic nonlinearity $g(\theta)=\ln \theta$. 

\begin{pro}
\label{prolog}  
        Suppose that Assumptions  \ref{A1}-\ref{A4},  \ref{A5} {\bf L}  and \ref{B1} hold.
        Let $(u,\theta)$ be a $C^\infty$ solution of \eqref{obstacleepmfg1sys}-\eqref{obstacleepmfg2sys}.
        Then, there exist constants  $C,\,C_p,\,\theta_0>0$ that do not depend on the particular solution nor on $\epsilon$, such that, for $i=1,\ldots,d$
and for any
$p\in[1,+\infty)$, 
                \beq\label{w1pestlog}\|u^i\|_{W^{1,p}(\Tt^N)}\leq C_p.\eeq
                Moreover, for any
                $\gamma\in(0,1)$,
                \beq\label{calphaestlog}\|u^i\|_{C^\gamma(\Tt^N)}\leq C. \eeq
In addition, 
        \beq\label{lowerboundthelog}\theta^i\geq\theta_0\quad\text{in }\Tt^N,\eeq
        \beq\label{w12thetalog}\|\theta^i\|_{W^{1,2}(\Tt^N)}\leq C,\eeq
        and
        \beq\label{w22estlog}\|u^i\|_{W^{2,2}(\Tt^N)}\leq C.\eeq          
        \end{pro}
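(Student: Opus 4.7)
The proof upgrades the integral estimates of Lemmas \ref{estimate1lem} and \ref{estimateslem2} to the pointwise and Sobolev-norm bounds claimed, exploiting the special structure of the logarithmic nonlinearity. The crucial preliminary observation is the pointwise inequality $u^i \le g(\theta^i) = \ln \theta^i$, which follows from \eqref{obstacleepmfg1sys} upon dropping the non-negative terms $H^i$ and $\sum_{j \ne i}\beta_\epsilon(\cdot)$. Integrating $e^{u^i} \le \theta^i$ against the bound \eqref{btl1} yields $\int_{\Tt^N} e^{u^i}\,dx \le C$, so $(u^i)_+$ is controlled in every $L^p(\Tt^N)$ uniformly in $\epsilon$. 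Combining the same inequality with the quadratic lower bound \eqref{Hquadratic} gives the pointwise gradient estimate
\[
\tfrac{\gamma}{2}|Du^i|^2 \;\le\; \ln \theta^i - u^i + C,
\]
whose right-hand side equals $H^i(Du^i,x) + \sum_{j\ne i}\beta_\epsilon(u^i-u^j-\psi^{ij}) + C$ and is therefore non-negative.

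The plan from here is a bootstrap in which the HJ equation and the Fokker--Planck equation \eqref{obstacleepmfg2sys} are used alternately. The seed is \eqref{w12alpthet} specialized to $\alpha=0$, which gives $\|(\theta^i)^{1/2}\|_{W^{1,2}(\Tt^N)} \le C$ and hence $\theta^i \in L^{N/(N-2)}$ via Sobolev embedding (and $\theta^i \in L^q$ for every finite $q$ when $N \le 2$). Feeding this into the gradient estimate above improves the integrability of $Du^i$; viewing \eqref{obstacleepmfg2sys} as an equation for $\theta^i$ with drift $D_p H^i(Du^i,x)$ of improving integrability then improves the Sobolev regularity of $\theta^i$. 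After finitely many iterations one reaches $u^i \in W^{1,p}(\Tt^N)$ for every $p < \infty$ with constants independent of $\epsilon$, which is \eqref{w1pestlog}. Morrey's embedding then yields the H\"older bound \eqref{calphaestlog}.

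Once $u^i$ is bounded uniformly in $L^\infty$, rewriting \eqref{obstacleepmfg1sys} as
\[
\ln \theta^i \;=\; H^i(Du^i,x) + u^i + \sum_{j\ne i}\beta_\epsilon(u^i-u^j-\psi^{ij}) \;\ge\; u^i \;\ge\; -\|u^i\|_{L^\infty}
\]
(using non-negativity of $H^i$ and of the penalty) yields $\theta^i \ge e^{-\|u^i\|_{L^\infty}} =: \theta_0 > 0$, which is \eqref{lowerboundthelog}. The $W^{2,2}$ bound \eqref{w22estlog} is then immediate from the weighted Hessian estimate \eqref{d2utheta} on dividing by $\theta_0$. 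Finally, the Fisher information bound \eqref{g'thetadth}, which for $g(\theta) = \ln\theta$ reads $\int_{\Tt^N}|D\theta^i|^2/\theta^i\,dx \le C$, combined with a uniform $L^\infty$ upper bound on $\theta^i$ extracted from \eqref{obstacleepmfg1sys} once its right-hand side is under control, gives $\int_{\Tt^N}|D\theta^i|^2\,dx \le C$, which is \eqref{w12thetalog}.

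The main technical obstacle is closing the bootstrap with constants independent of $\epsilon$. The two equations must be iterated in tandem, and the penalty contribution to the drift in \eqref{obstacleepmfg2sys} has to be handled through the integral estimate \eqref{btl45}, which provides only mean control rather than pointwise control on $\beta'_\epsilon$. Propagating a uniform $L^\infty$ upper bound on $\theta^i$, needed for the $W^{1,2}$ estimate, is in my view the most delicate step: it cannot be read off the entropy and Fisher-information bounds alone and forces one to exploit the full Hamilton--Jacobi structure after the $L^\infty$ bound on $u^i$ has been established.
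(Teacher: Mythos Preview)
Your bootstrap for \eqref{w1pestlog} is where the proposal breaks down. The step ``viewing \eqref{obstacleepmfg2sys} as an equation for $\theta^i$ with drift of improving integrability then improves the Sobolev regularity of $\theta^i$'' does not work: the Fokker--Planck equation \eqref{obstacleepmfg2sys} is first order in $\theta^i$, so there is no elliptic gain, and any Moser-type iteration on it (as carried out later in Section~\ref{lipbounds}) requires \emph{pointwise} control of $\beta'_\epsilon$, which is available only with $\epsilon$-dependent constants. The $L^1$ bound \eqref{btl45} you invoke cannot close such an iteration. Your alternation between the two equations therefore never gets off the ground with uniform-in-$\epsilon$ constants.

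The paper bypasses all of this with one elementary inequality, $\ln z \le z^{1/p} + C_p$. Plugging it into the pointwise bound you already wrote, $\tfrac{\gamma}{2}|Du^i|^2 \le \ln\theta^i - u^i + C$, and raising to the $p$-th power gives
\[
|Du^i|^{2p} \;\le\; C_p\Bigl(\theta^i + \Bigl|u^i-\textstyle\int_{\Tt^N} u^i\,dx\Bigr|^{p} + 1\Bigr),
\]
and a single integration using \eqref{btl1}, \eqref{btl3}, Poincar\'e, and Young's inequality yields $\int_{\Tt^N}|Du^i|^{2p}\,dx \le C_p$ directly---no iteration, no improved integrability of $\theta^i$ beyond $L^1$, no use of \eqref{obstacleepmfg2sys} at all. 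The sublinear growth of the logarithm is exactly what makes case \textbf{L} so clean.

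For \eqref{w12thetalog} the paper invokes only \eqref{btl1}, \eqref{g'thetadth}, the \emph{lower} bound \eqref{lowerboundthelog}, and Poincar\'e; it does not pass through an $L^\infty$ upper bound on $\theta^i$. Your proposed route---reading an upper bound for $\theta^i$ off \eqref{obstacleepmfg1sys} once $u^i\in L^\infty$---cannot give $\epsilon$-independent constants anyway, since $\ln\theta^i = H^i(Du^i,x) + u^i + \sum_{j\ne i}\beta_\epsilon(u^i-u^j-\psi^{ij})$ and the penalty terms blow up as $\epsilon\to 0$.
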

\begin{proof}
In what follows, we use $C$ and $C_p$ to denote any of several constants, possibly depending on $p$ but independent of $\epsilon$.
We remark that, for any $p\geq 1$, there exists a constant, $C_p>0,$ such that 
$$\log(\theta^i)\leq (\theta^i)^\frac{1}{p}+C_p.$$ Therefore, from \eqref{obstacleepmfg1sys}, using \eqref{Hquadratic} in Remark \ref{R2} and the positivity of  $\beta_\ep$, we infer that 
\beqs C|D u^i|^2\leq  (\theta^i)^\frac{1}{p}+C_p-u^i=(\theta^i)^\frac{1}{p}+C_p-\left(u^i-\intor u^idx\right)-\intor u^i dx.\eeqs
Combining the previous inequality with  \eqref{btl3} yields
\beqs |D u^i|^{2p}\leq  C\theta^i+C\left|u^i-\intor u^idx\right|^p+C_p.\eeqs
Then, integrating,  using \eqref{btl1} and the Poincar\'{e} inequality, we get 
\beqs\begin{split} \intor |D u^i|^{2p}dx & \leq  C\intor \theta^idx+C\intor\left|u^i-\intor u^idx\right|^pdx +C_p\\&
\leq C_p\intor |D u^i|^{p}dx+C_p\\&
\leq \frac{1}{2}\intor |D u^i|^{2p}dx+C_p.
\end{split}\eeqs
 We conclude that, for any $p\geq 1$,
 \beqs \intor|D u^i|^{2p}dx\leq  C.\eeqs This bound, together with \eqref{btl3} and the Poincar\'{e} inequality, gives \eqref{w1pestlog}. The Sobolev Embedding Theorem then implies 
 \eqref{calphaestlog}.   In particular, we have 
 \beqs \|u^i\|_{L^\infty(\Tt^N)}\leq C.\eeqs From  \eqref{obstacleepmfg1sys}, the 
 previous estimate and the positivity of $H$ and $\beta$, we infer that 
 $$\log(\theta^i)\geq -C,$$ from which \eqref{lowerboundthelog} follows.

Estimate \eqref{w12thetalog} is a consequence of \eqref{btl1}, \eqref{g'thetadth}, \eqref{lowerboundthelog} and the Poincar\'{e} inequality.
 
  Finally, estimate \eqref{w22estlog} is a consequence of \eqref{btl3}, \eqref{btl5}, \eqref{d2utheta}, \eqref{lowerboundthelog} and the Poincar\'{e} inequality.
 
\end{proof}

\subsection{Power case}

We devote this section to the study of power nonlinearities. We begin by examining the general case, Assumption \ref{A5} {\bf P}. Then,
we obtain additional results
by considering 
 Assumption \ref{A5} {\bf P-$\frac 2 N$}. As in the previous section, our estimates are 
uniform in $\epsilon$. 

\begin{pro}
\label{propower}
        Suppose that Assumptions  \ref{A1}-\ref{A4}, \ref{A5} {\bf P}  and \ref{B1} hold.
        Let $(u,\theta)$ be a $C^\infty$ solution of \eqref{obstacleepmfg1sys}-\eqref{obstacleepmfg2sys}.
        Then, there exist constants,  $C>0$ and $\gamma\in(0,1),$ that do not depend on the particular solution nor on $\epsilon$, such that, for $i=1,\ldots,d,$
        \beq\label{w1pestpower}\|u^i\|_{W^{1,\frac{2}{\alpha}}(\Tt^N)}\leq C\eeq 
         and 
        \beq\label{d2uthetafurterpower}\intor|D((\theta^i)^\frac{\alpha+1}{2}Du^i)|^\frac{2}{\alpha+1}dx\leq C.\eeq
        If, in addition, Assumption \ref{A5} {\bf P-$\frac 2 N$} holds, then there exists $\gamma=\gamma(\alpha)$ such that 
        \beq\label{holderestimpower} \|u^i\|_{C^\gamma(\Tt^N)}\leq C.\eeq
        \end{pro}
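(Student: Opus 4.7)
The plan is to extract the $L^{2/\alpha}$ bound on $Du^i$ directly from the Hamilton--Jacobi equation. Dropping the non-negative penalty term in \eqref{obstacleepmfg1sys} and invoking the quadratic coercivity \eqref{Hquadratic} yields the pointwise inequality
\[
c|Du^i|^2 \le (\theta^i)^{\alpha} + C - u^i.
\]
Since $\alpha \le 1$ by Assumption \ref{A5} {\bf P}, raising to the power $1/\alpha \ge 1$, using that the mean $\bar u^i := \intor u^i\,dx$ is uniformly bounded by \eqref{btl3}, and then integrating gives
\[
\intor |Du^i|^{2/\alpha}\,dx \le C \intor \theta^i\,dx + C + C \intor |u^i - \bar u^i|^{1/\alpha}\,dx.
\]
Estimate \eqref{btl1} controls the first term. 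Poincar\'e--Wirtinger at exponent $1/\alpha$, followed by H\"older on the unit-volume torus, yields
\[
\intor |u^i - \bar u^i|^{1/\alpha}\,dx \le C \intor |Du^i|^{1/\alpha}\,dx \le C\Bigl(\intor |Du^i|^{2/\alpha}\,dx\Bigr)^{1/2},
\]
and a standard Young-type reabsorption produces \eqref{w1pestpower}.

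For the weighted second-order estimate \eqref{d2uthetafurterpower}, I would expand via the product rule as
\[
D\bigl((\theta^i)^{(\alpha+1)/2} Du^i\bigr) = \tfrac{\alpha+1}{2}(\theta^i)^{(\alpha-1)/2}\,D\theta^i \otimes Du^i + (\theta^i)^{(\alpha+1)/2} D^2 u^i
\]
and bound each summand in $L^{2/(\alpha+1)}$ by H\"older with the conjugate pair $(\alpha+1,\,(\alpha+1)/\alpha)$. The first summand combines the $L^2$ bound on $(\theta^i)^{(\alpha-1)/2} D\theta^i$ coming from \eqref{w12alpthet} (which amounts to $D((\theta^i)^{(\alpha+1)/2}) \in L^2$) with the freshly-proved $L^{2/\alpha}$ bound on $Du^i$ from \eqref{w1pestpower}. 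The second summand is handled by splitting $(\theta^i)^{(\alpha+1)/2} = (\theta^i)^{\alpha/2}\cdot(\theta^i)^{1/2}$, then using $(\theta^i)^{1/2} D^2 u^i \in L^2$ from \eqref{d2utheta} together with $\intor (\theta^i)^{\alpha/2 \cdot 2/\alpha}\,dx = \intor \theta^i\,dx \le C$ from \eqref{btl1}.

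Under the additional hypothesis $\alpha < 2/N$ of Assumption \ref{A5} {\bf P-$\frac{2}{N}$}, the exponent $2/\alpha$ strictly exceeds $N$, so Morrey's embedding $W^{1,2/\alpha}(\Tt^N) \hookrightarrow C^{\gamma}(\Tt^N)$ with $\gamma = 1 - N\alpha/2$ immediately converts \eqref{w1pestpower} into \eqref{holderestimpower}.

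The main technical difficulty lies entirely in the reabsorption step of the first paragraph: raising the HJ inequality to the $1/\alpha$ power introduces a term $|u^i - \bar u^i|^{1/\alpha}$ that is not a priori lower order, and the Poincar\'e exponent $1/\alpha$ must align with the half-power loss produced by H\"older so as to produce exactly an $\bigl(\intor |Du^i|^{2/\alpha}\bigr)^{1/2}$ that can be absorbed back into $\intor |Du^i|^{2/\alpha}$ on the left. Once \eqref{w1pestpower} is in hand, the remaining two estimates are systematic H\"older bookkeeping against the uniform bounds established in Lemmas \ref{estimate1lem} and \ref{estimateslem2}.
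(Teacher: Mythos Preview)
Your proposal is correct and follows essentially the same approach as the paper: the same pointwise HJ inequality, the same $1/\alpha$-power reabsorption via Poincar\'e, the same product-rule splitting of $D((\theta^i)^{(\alpha+1)/2}Du^i)$, and the same H\"older pairings against \eqref{btl1}, \eqref{d2utheta}, \eqref{w12alpthet}, and \eqref{w1pestpower}. Your handling of the second summand (splitting $(\theta^i)^{(\alpha+1)/2}=(\theta^i)^{\alpha/2}(\theta^i)^{1/2}$ and using H\"older with exponents $2/\alpha$ and $2$) is a cosmetic rephrasing of the paper's computation $\intor |D^2u^i|^{2/(\alpha+1)}\theta^i\,dx=\intor [|D^2u^i|^2\theta^i]^{1/(\alpha+1)}(\theta^i)^{\alpha/(\alpha+1)}\,dx$ with H\"older exponents $\alpha+1$ and $(\alpha+1)/\alpha$; both yield the identical bound.
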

\begin{proof}
In what follows, we denote by $C$  several constants that are independent of $\epsilon$ and $\delta$. 
 From \eqref{obstacleepmfg1sys},  \eqref{Hquadratic}, and $\beta_\ep\geq 0$, we infer that
 \beqs C|D u^i|^2\leq (\theta^i)^\alpha+C-u^i= (\theta^i)^\alpha+C-\left(u^i-\intor u^idx\right)-\intor u^i dx.\eeqs
 Consequently,  from  \eqref{btl3},
 \beqs |D u^i|^\frac{2}{\alpha}\leq  C\theta^i+C\left|u^i-\intor u^idx\right|^\frac{1}{\alpha}+C.\eeqs
 Then, integrating and using \eqref{btl1} and the Poincar\'{e} inequality, we get 
 \beqs\begin{split} \intor |D u^i|^\frac{2}{\alpha}dx&\leq C\intor \theta^idx+C\intor\left|u^i-\intor u^idx\right|^\frac{1}{\alpha} dx+C\\&
\leq C\intor |D u^i|^\frac{1}{\alpha}dx+C\\&
\leq \frac{1}{2}\intor |D u^i|^\frac{2}{\alpha}dx+C.
\end{split}\eeqs
Therefore, 
$$\intor |D u^i|^\frac{2}{\alpha}dx\leq C,$$ which gives,  together with \eqref{btl3} and the Poincar\'{e} inequality,  the bound \eqref{w1pestpower}.
If  $\alpha\in \left(0, \frac{2}{N}\right)$, then  $\frac{2}{\alpha}>N$. Therefore,
estimate \eqref{holderestimpower} is a consequence of \eqref{w1pestpower} combined with the  Sobolev Embedding Theorem.

Next,  to prove \eqref{d2uthetafurterpower}, we compute
\beq\label{dthetducomp} D((\theta^i)^\frac{\alpha+1}{2}Du^i)=\frac{\alpha+1}{2}\theta^\frac{\alpha-1}{2}Du^i\otimes D\theta^i+\theta^\frac{\alpha+1}{2}D^2 u^i.\eeq
Now, using  H\"{o}lder's inequality, we have
\beqs\begin{split}&\intor \left[\theta^\frac{\alpha-1}{2}|D\theta^i||Du^i|\right]^\frac{2}{\alpha+1}dx\\&
=\intor(\theta^i)^\frac{\alpha-1}{\alpha+1}|D\theta^i|^\frac{2}{\alpha+1}|Du^i|^\frac{2}{\alpha+1}dx\\&
\leq \left[\intor\left[(\theta^i)^\frac{\alpha-1}{\alpha+1}|D\theta^i|^\frac{2}{\alpha+1}\right]^{\alpha+1}dx\right]^\frac{1}{\alpha+1}\left[\intor|Du^i|^{\frac{2}{\alpha+1}(\alpha+1)'}dx\right]^\frac{1}{(\alpha+1)'}\\&
=\left[\intor(\theta^i)^{\alpha-1}|D\theta^i|^2dx\right]^\frac{1}{\alpha+1}\left[ \intor|Du^i|^\frac{2}{\alpha}dx\right]^\frac{\alpha}{\alpha+1}.
\end{split}\eeqs
From \eqref{w12alpthet} and \eqref{w1pestpower}, we infer that 
\beq\label{dthetduest1} \intor \left[\theta^\frac{\alpha-1}{2}|D\theta^i||Du^i|\right]^\frac{2}{\alpha+1}dx\leq C.\eeq
Next,  using H\"{o}lder's inequality again, we have
\beqs\begin{split} \intor \left[|D^2u^i|(\theta^i)^\frac{\alpha+1}{2}\right]^\frac{2}{\alpha+1}dx&= \intor |D^2u^i|^\frac{2}{\alpha+1}\theta^idx
=\intor\left[ |D^2u^i|^2\theta^i\right]^\frac{1}{\alpha+1}(\theta^i)^\frac{\alpha}{\alpha+1}dx\\&
\leq \left[\intor|D^2u^i|^2\theta^idx\right]^\frac{1}{\alpha+1}\left[\intor \theta^idx\right]^\frac{\alpha}{\alpha+1}.
\end{split}\eeqs
From \eqref{btl1} and \eqref{d2utheta}, we gather the bound
\beq\label{dthetduest2}\intor \left[|D^2u^i|(\theta^i)^\frac{\alpha+1}{2}\right]^\frac{2}{\alpha+1}dx\leq C.\eeq
Estimate \eqref{d2uthetafurterpower} is then a consequence of \eqref{dthetducomp}, \eqref{dthetduest1}  and  \eqref{dthetduest2}.
\end{proof}

\begin{pro}
        \label{MEst}
        Suppose that Assumptions  \ref{A1}-\ref{A4},  \ref{A5} {\bf P-$\frac 2 N$}, \ref{A6}, and \ref{B1} hold. 
        Let $(u,\theta)$ be a $C^\infty$ solution of \eqref{obstacleepmfg1sys}-\eqref{obstacleepmfg2sys}. Then,  for  $i=1,\ldots,d$ and any $x\in \Tt^N$, we have
        \beq\label{theta>1power}\theta^i(x)\geq \left(1-\max_{x\in\Tt^N\atop j=1,\ldots,d}H^j(0,x)\right)^{\frac 1 \alpha}.
 \eeq
Moreover, there exists $C>0$ that does not depend on the particular solution nor on $\epsilon$, such that
        \beq\label{w12thetapower}\|\theta^i\|_{W^{1,2}(\Tt^N)}\leq C\eeq and
        \beq\label{w22estpower}\|u^i\|_{W^{2,2}(\Tt^N)}\leq C.\eeq
        \end{pro}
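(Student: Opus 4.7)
The argument has three parts: the pointwise lower bound \eqref{theta>1power}, the Sobolev estimate \eqref{w12thetapower}, and the $W^{2,2}$ estimate \eqref{w22estpower}. The lower bound \eqref{theta>1power} is the main obstacle; once it is in hand, the remaining two bounds follow by essentially the same arguments used in Proposition \ref{prolog}, combining Lemma \ref{estimateslem2} (in particular \eqref{d2utheta}, \eqref{g'thetadth}, and \eqref{w12alpthet}) with the lower bound on $\theta^i$, the $W^{1,2/\alpha}$ bound of Proposition \ref{propower}, and Poincar\'{e}. The key insight for the lower bound is that working directly at a minimum of $\theta^i$ is unprofitable, because the Fokker--Planck equation there involves uncontrolled derivatives of $u^i$. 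Instead, I would work at a minimum of $u^i$, where $Du^i$ vanishes, the outgoing obstacle terms drop to zero, and Assumption \ref{A6} can be fully exploited; the lower bound on $u$ is then transferred to a lower bound on $\theta$ via the HJ equation.

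Concretely, let $(x_1, i_1)$ attain $\min_{i, x} u^i(x)$, so that $Du^{i_1}(x_1) = 0$ and $D^2 u^{i_1}(x_1) \geq 0$. Since $\psi^{i_1 j}(x_1) > 0$ and $u^{i_1}(x_1) \leq u^j(x_1)$, one has $u^{i_1}(x_1) - u^j(x_1) - \psi^{i_1 j}(x_1) < 0$ for every $j \neq i_1$, forcing $\beta_\epsilon$ and $\beta'_\epsilon$ evaluated at this argument to vanish. Then \eqref{obstacleepmfg1sys} at $(x_1, i_1)$ reduces to $(\theta^{i_1}(x_1))^\alpha = H^{i_1}(0, x_1) + u^{i_1}(x_1)$. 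In \eqref{obstacleepmfg2sys}, the outgoing penalty $\beta'_\epsilon(u^{i_1} - u^j - \psi^{i_1 j})\theta^{i_1}$ drops, while the incoming penalty $-\beta'_\epsilon(u^j - u^{i_1} - \psi^{j i_1})\theta^j$ is nonpositive. Expanding the divergence at $x_1$ and invoking $D^2_{xp} H^{i_1}(0, x_1) = 0$ from Assumption \ref{A6} eliminates $\tr D^2_{xp} H^{i_1}$, and convexity of $H^{i_1}$ in $p$ together with $D^2 u^{i_1}(x_1) \geq 0$ yields $\tr[D^2_{pp} H^{i_1}(0, x_1) D^2 u^{i_1}(x_1)] \geq 0$. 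To handle the remaining cross term $D_p H^{i_1}(0, x_1) \cdot D\theta^{i_1}(x_1)$, I would differentiate \eqref{obstacleepmfg1sys} in $x_k$ at $x_1$ to obtain
\[
g'(\theta^{i_1}(x_1))\, D\theta^{i_1}(x_1) = D^2 u^{i_1}(x_1)\, D_p H^{i_1}(0, x_1) + D_x H^{i_1}(0, x_1),
\]
then dot-multiply by $D_p H^{i_1}(0, x_1)$ and invoke the orthogonality $D_x H^{i_1}(0, x_1) \cdot D_p H^{i_1}(0, x_1) = 0$ from Assumption \ref{A6} to conclude $D_p H^{i_1}(0, x_1) \cdot D\theta^{i_1}(x_1) \geq 0$. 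Collecting signs in \eqref{obstacleepmfg2sys} then gives $\theta^{i_1}(x_1)\bigl[1 - \tr[D^2_{pp} H^{i_1}(0, x_1) D^2 u^{i_1}(x_1)]\bigr] \geq 1$, forcing both the bracket and $\theta^{i_1}(x_1)$ itself to be $\geq 1$.

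Feeding this back into HJ at $(x_1, i_1)$ yields
\[
u^{i_1}(x_1) = (\theta^{i_1}(x_1))^\alpha - H^{i_1}(0, x_1) \geq 1 - H^{i_1}(0, x_1) \geq 1 - \max_{j, x} H^j(0, x).
\]
Since $u^{i_1}(x_1) = \min_{i, x} u^i(x)$, the inequality $u^i(x) \geq 1 - \max_j H^j(0, \cdot)$ propagates to every $(i, x)$. Reading \eqref{obstacleepmfg1sys} at an arbitrary point and using $H^i, \beta_\epsilon \geq 0$ now gives $(\theta^i(x))^\alpha = g(\theta^i(x)) \geq u^i(x) \geq 1 - \max_j H^j(0, \cdot)$, which is \eqref{theta>1power}. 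The hardest part of the argument is the computation at $(x_1, i_1)$: one must orchestrate both pieces of Assumption \ref{A6} so that $\tr D^2_{xp} H^{i_1}$ and the cross-term $D_x H^{i_1} \cdot D_p H^{i_1}$ both drop out simultaneously, while carefully tracking sign cancellations in \eqref{obstacleepmfg2sys}.
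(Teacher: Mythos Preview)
Your proposal is correct and follows essentially the same route as the paper: work at a global minimum of $u$ (not of $\theta$), use vanishing of the outgoing $\beta_\epsilon,\beta'_\epsilon$ terms and Assumption~\ref{A6} to reduce the Fokker--Planck equation at that point to $\theta^{i_1}(x_1)\geq 1$, then transfer the resulting lower bound on $u$ to a pointwise lower bound on every $\theta^j$ via \eqref{obstacleepmfg1sys}; the only cosmetic difference is that the paper substitutes $\theta^i=g^{-1}(\cdots)$ into \eqref{obstacleepmfg2sys} before evaluating, whereas you differentiate \eqref{obstacleepmfg1sys} to express $D\theta^{i_1}$---these are algebraically equivalent. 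One small slip: from $\theta^{i_1}(x_1)\bigl[1-\tr(D^2_{pp}H^{i_1}D^2u^{i_1})\bigr]\geq 1$ the bracket is forced to be \emph{positive} (indeed $\leq 1$), not $\geq 1$; what matters is that this still yields $\theta^{i_1}(x_1)\geq 1$.
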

\begin{proof}
We begin the proof by establishing a lower bound on $u$. 
Let $i\in\{1,\ldots,d\}$ and $x_0\in\Tt^N$ be such that
\beqs u^{i}(x_0)=\min_{j=1,\ldots,d\atop x\in\Tt^N}u^j(x).\eeqs
Then, we have 
\beq\label{derix_0}Du^i(x_0)=0,\,D^2u^i(x_0)\geq 0,\eeq 
and 
$$
u^i(x_0)\leq u^j(x_0)\quad\text{for any }j=1,\ldots,d.
$$
In particular, the last inequality implies 
\beq\label{betax_0}\beta_\ep(u^i(x_0)-u^j(x_0)-\psi^{ij}(x_0))=\beta_\ep'(u^i(x_0)-u^j(x_0)-\psi^{ij}(x_0))=0 \quad\text{for any }j=1,\ldots,d.\eeq
From  \eqref{obstacleepmfg1sys}, \eqref{derix_0}, and \eqref{betax_0}, we infer that 
\beq\label{u(x0)value} H^i(0,x_0)+u^i(x_0)=g(\theta^i(x_0)).\eeq
We can substitute 
$$\theta^i=g^{-1}\left(H^i(Du^i,x)+u^i+\sum_{j\ne i}\beta_\ep(u^i-u^j-\psi^{ij})\right)$$
in \eqref{obstacleepmfg2sys} to get
\beqs\begin{split}&-\theta^i H^i_{p_kp_j}u^i_{x_jx_k}-\theta^i H^i_{p_kx_k}-\frac{1}{g'(\theta^i)}\big[H^i_{p_k}H^i_{p_j}u^i_{x_jx_k}+H^i_{x_k}H^i_{p_k}+H^i_{p_k}u^i_{x_k}\\&+
\sum_{j\neq i}\beta_\ep'(u^i-u^j-\psi^{ij})(u^i-u^j-\psi^{ij})_{x_k}H^i_{p_k}\big]+\theta^i+\sum_{j\neq i}\beta_\ep'(u^i-u^j-\psi^{ij})\theta^i\\&
=\sum_{j\neq i}\beta_\ep'(u^j-u^i-\psi^{ji})\theta^j+1.
\end{split}\eeqs
Evaluating at $x=x_0$ and using \eqref{Hconvexity}, \eqref{derix_0} and \eqref{betax_0}, we obtain
\beqs -\theta^i H^i_{p_kx_k}(0,x_0)-\frac{1}{g'(\theta^i(x_0))}H^i_{x_k}(0,x_0)H^i_{p_k}(0,x_0)+\theta^i(x_0)\geq 1.\eeqs
Since  $H^i$  satisfies \eqref{bgradcondpower}, the preceding inequality can be rewritten as
\beqs \theta^i(x_0)\geq 1.\eeqs
Then, \eqref{u(x0)value}  and the last estimate imply
\beq\label{u(x0)valuebis} u^i(x_0)\geq g^{-1}\left(1-H^i(0,x_0)\right).\eeq
Now, from \eqref{obstacleepmfg1sys}, \eqref{u(x0)valuebis}, \eqref{bgradcondpower}, \eqref{blipconprop} and the positivity of $H^j$ and $\beta_\ep$, we infer that, for
any $x\in \Tt^N$ and  $j=1,\ldots,d$,
\beqs\begin{split}g(\theta^j(x))\geq H^j(Du^j,x)+u^j(x)\geq  u^i(x_0)\geq 1-H^i(0,x_0). \end{split}\eeqs Thus, \eqref{theta>1power} follows from the preceding inequality. 

Estimate \eqref{w12thetapower} follows by combining \eqref{btl1}, \eqref{g'thetadth}, \eqref{theta>1power} and the Poincar\'{e} inequality.
Finally, \eqref{w22estpower} is a consequence of \eqref{btl3}, \eqref{btl5}, \eqref{d2utheta}, \eqref{theta>1power}, and the Poincar\'{e} inequality. 
\end{proof}

\section{Lipschitz bounds}
\label{lipbounds}

In this section, we prove the Lipschitz continuity of $u$ for any solution $(u, \theta)$ of \eqref{obstacleepmfg1sys}-\eqref{obstacleepmfg2sys}. These bounds are used to establish
the existence of solutions by the continuation method. In contrast to the results in the preceding sections, the estimates here depend on $\epsilon$ and
are not valid for \eqref{obstacleepmfg1}-\eqref{obstacleepmfg2}.

\begin{lem}\label{liplem}
        Suppose that Assumptions \ref{A1}-\ref{A5}  and \ref{B1} hold, and that either
\begin{itemize}
        \item[-]  Assumption \ref{A5} {\bf L} or
        \item[-]  Assumptions \ref{A5} {\bf P-$\frac 2 N$}, \ref{A6} and \ref{A7} 
\end{itemize}   
        hold. Let $(u,\theta)$ be a $C^\infty$ solution of \eqref{obstacleepmfg1sys}-\eqref{obstacleepmfg2sys}.
        Then, there exists $C_\ep>0$ that does not depend on the particular solution, such that, for any $i=1,\ldots,d$, 
        \beq\label{thetabounded}\|\theta^i\|_{L^\infty(\Tt^N)}\leq C_\ep\eeq and
        \beq
        \label{lipepest}\|u^i\|_{W^{1,\infty}(\Tt^N)}\leq C_\ep.
        \eeq
\end{lem}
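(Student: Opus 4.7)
The plan is to establish the two estimates in sequence: first \eqref{thetabounded}, then \eqref{lipepest}. The crucial inputs are the $\epsilon$-independent bounds from the preceding propositions, which provide $u \in (C^\gamma\cap W^{2,2}\cap W^{1,p})(\Tt^N)$ for some $p>N$ (Proposition \ref{prolog} in the logarithmic case, Propositions \ref{propower} and \ref{MEst} in the power case under Assumptions \ref{A5} {\bf P-$\frac 2 N$}, \ref{A6}, \ref{A7}), together with the positive lower bound $\theta^i\geq\theta_0>0$.

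For the $L^\infty$ bound on $\theta^i$, the first observation is that $\epsilon$-dependence is allowed: because $|u^i-u^j-\psi^{ij}|$ is uniformly bounded by the $C^\gamma$ estimate and $\beta_\ep$ is smooth, the quantities $\beta_\ep'(u^i-u^j-\psi^{ij})$ and $\beta_\ep''(u^i-u^j-\psi^{ij})$ are bounded by constants $C_\ep$ depending only on $\epsilon$ and on previously controlled norms of $u$. Expanding the divergence in \eqref{obstacleepmfg2sys}, I would rewrite the Fokker-Planck equation as the first-order linear PDE
\[
-D_pH^i(Du^i,x)\cdot D\theta^i+\bigl(1+\Phi^i-\di D_pH^i(Du^i,x)\bigr)\theta^i=1+\Psi^i,
\]
where $\Phi^i=\sum_{j\ne i}\beta_\ep'(u^i-u^j-\psi^{ij})\in L^\infty$ (with $C_\ep$ bound) and $\Psi^i=\sum_{j\ne i}\beta_\ep'(u^j-u^i-\psi^{ji})\theta^j$. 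Testing with $(\theta^i)^{q-1}$ and integrating by parts transfers the drift onto a divergence in $(\theta^i)^q$; using the quadratic convexity bound on $H^i$ together with $u\in W^{2,2}\cap W^{1,p}$ for $p>N$, the coefficient $\di D_pH^i = H^i_{p_kp_l}u^i_{x_kx_l}+H^i_{p_kx_k}$ lies in $L^r$ for some $r>N/2$, and I would absorb its negative part through H\"older and Sobolev inequalities. Iterating this Moser-type procedure, summing over $i$ to handle the off-diagonal coupling from $\Psi^i$, yields \eqref{thetabounded} with an $\epsilon$-dependent constant.

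For the Lipschitz bound \eqref{lipepest}, the argument is then short. From \eqref{obstacleepmfg1sys}, the non-negativity of $\beta_\ep$, the $L^\infty$ bound on $u^i$ (from $C^\gamma$) and the bound \eqref{thetabounded}, we obtain
\[
H^i(Du^i,x)\leq g(\theta^i)-u^i\leq g(C_\ep)+C=C_\ep.
\]
Combined with the quadratic lower bound on $H^i$ from \eqref{Hquadratic} in Remark \ref{R2}, this gives $|Du^i|^2\leq C_\ep$, i.e. \eqref{lipepest}.

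The main obstacle is Step 1: the Fokker-Planck equation \eqref{obstacleepmfg2sys} is purely first-order in $\theta^i$, so the $L^\infty$ estimate cannot come from any built-in diffusive maximum principle. The positivity of the zeroth-order coefficient $1+\Phi^i$, the uniform positive lower bound $\theta^i\geq\theta_0$, and the $W^{2,2}$ regularity of $u$ (which gives just enough integrability of $\di D_pH^i(Du^i,x)$) are precisely what makes the Moser iteration close; at the same time, this is the step where the $\epsilon$-dependence is unavoidable, since the bounds on $\beta_\ep',\beta_\ep''$ blow up as $\epsilon\to 0$.
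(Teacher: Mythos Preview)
Your Step~2 (deducing \eqref{lipepest} from \eqref{thetabounded}) is exactly what the paper does. The gap is in Step~1.

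Testing the Fokker--Planck equation \eqref{obstacleepmfg2sys} with $(\theta^i)^{q-1}$ cannot launch a Moser iteration, because \eqref{obstacleepmfg2sys} is a \emph{first-order} transport equation in $\theta^i$: after your integration by parts the drift term becomes $\frac1q\intor \di(D_pH^i)\,(\theta^i)^q\,dx$, and all you are left with is an identity of the form $\intor c(x)(\theta^i)^q\,dx = \intor(1+\Psi^i)(\theta^i)^{q-1}\,dx$ with $c=1+\Phi^i-(1-\frac1q)\di D_pH^i$. There is no $\intor(\theta^i)^{q-1}|D\theta^i|^2\,dx$ term anywhere, so Sobolev's inequality gives you nothing to iterate; and since $\di D_pH^i$ is only in $L^2$ (from $u\in W^{2,2}$), the coefficient $c$ has no sign. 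Your claim that its negative part can be ``absorbed through H\"older and Sobolev'' has no coercive term to absorb it into.

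The paper's device is to multiply \eqref{obstacleepmfg2sys} not by a power of $\theta^i$ but by $\di\bigl((\theta^i)^pD_pH^i(Du^i,x)\bigr)$, and then to use the \emph{Hamilton--Jacobi} equation: differentiating \eqref{obstacleepmfg1sys} once gives $H^i_{p_j}u^i_{x_jx_l}=g'(\theta^i)\theta^i_{x_l}+\text{lower order}$, and substituting this into the cross term produces the coercive contribution $(p+1)g'(\theta^i)(\theta^i)^{p}H^i_{p_kp_l}\theta^i_{x_k}\theta^i_{x_l}\ge C(p+1)(\theta^i)^{p-1}|D\theta^i|^2$ (here the lower bound $\theta^i\ge\theta_0$ and \eqref{g'prop} enter to get $g'(\theta^i)\theta^i\ge C_0$). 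This is the Evans adjoint-method trick; it manufactures a gradient term out of the MFG coupling, and the Moser iteration then closes precisely under the exponent condition in Assumption~\ref{A7}. The fact that your argument never invokes \ref{A7} is itself a signal that the mechanism is missing.
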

\begin{proof}
First, note that \eqref{lipepest} is an immediate consequence of \eqref{thetabounded}. Indeed, by combining  \eqref{thetabounded}  with   \eqref{Hquadratic}, by the positivity of $\beta_\ep,$ by the boundedness of $u^i$ (c.f. Proposition \ref{prolog} and Proposition \ref{propower}), and by   \eqref{obstacleepmfg1sys},
we get
$$C|D u^i|^2\leq H^i(D u^i)+C\leq  g(\theta^i)-u^i+C\leq C.$$
Consequently, we only need to prove   \eqref{thetabounded}. 

If Assumption \ref{A5} {\bf L} holds  or if  Assumptions \ref{A5} {\bf P-$\frac 2 N$} and \ref{A6} hold
by, respectively, Propositions
\ref{prolog} and \ref{MEst}, 
then there exists $\theta_0>0$, 
 such that, for any $i=1,\ldots,d$ and for any $x\in \Tt^N,$ \beq\label{thelowboundepasslem}\theta^i(x)\geq \theta_0>0.
 \eeq

For any fixed $\ep$,  by \eqref{calphaestlog} and \eqref{holderestimpower},  there exists a constant, $C,$ depending on $\ep$,  such that, for any  $i,j=1,\ldots,d$,
\beq\label{beta'bounded}\beta_\ep'(u^i-u^j-\psi^{ij})\leq C.\eeq

To prove \eqref{thetabounded}, we use a technique introduced in   \cite{E1}  and used in \cite{GPat} to study a mean-field-game obstacle problem. For $p>0$, we multiply the equation \eqref{obstacleepmfg2sys} by $\di ((\theta^i)^p D_pH^i(Du^i,x))$ and integrate by parts. Accordingly, we get\beq\label{lemlipine1}\begin{split} &\intor [\theta^i+\sum_{j\neq i}\beta_\ep'(u^i-u^j-\psi^{ij})\theta^i-\sum_{j\neq i}\beta_\ep'(u^j-u^i-\psi^{ji})\theta^j]\di ((\theta^i)^p D_pH^i)dx\\&
=\intor (\theta^i H^i_{p_k})_{x_k}  ((\theta^i)^p H^i_{p_j})_{x_j}dx \\&
=\intor (\theta^i H^i_{p_k})_{x_j}  ((\theta^i)^p H^i_{p_j})_{x_k}dx \\&
=\intor (\theta^i (H^i_{p_k})_{x_j}+\theta^i_{x_j}H^i_{p_k})((\theta^i)^p (H^i_{p_j})_{x_k}+p(\theta^i)^{p-1}\theta^i_{x_k}H^i_{p_j})dx\\&
=\intor (\theta^i)^{p+1}(H^i_{p_k})_{x_j}(H^i_{p_j})_{x_k}+p(\theta^i)^{p-1}H^i_{p_k}\theta^i_{x_k}H^i_{p_j}\theta^i_{x_j}\\&+(p+1)(\theta^i)^p\theta^i_{x_k}H^i_{p_j}(H^i_{p_k})_{x_j}dx\\&
=:\intor I_1+I_2+I_3 dx.
\end{split}\eeq
Using   \eqref{growthH} in Assumption \ref{A4}, we get
\beqs\begin{split}
I_1&=(\theta^i)^{p+1}(H^i_{p_kp_l}u_{x_lx_j}+H^i_{p_kx_j})(H^i_{p_jp_m}u_{x_mx_k}+H^i_{p_jx_k})\\&
\ge (\theta^i)^{p+1}[\gamma^2|D^2u^i|^2-C(1+|Du^i|)|D^2u^i|-C(1+|Du^i|^2)]\\&
\ge  (\theta^i)^{p+1}\tilde{\gamma}^2|D^2u^i|^2 -C(\theta^i)^{p+1}(1+|Du^i|^2)
\end{split}
\eeqs for some $\tilde{\gamma}>0$. Clearly, $$I_2=p(\theta^i)^{p-1}|D_pH^i\cdot D\theta^i|^2.$$ Next, we estimate $I_3$ from below. 
From equation \eqref{obstacleepmfg1sys}, we gather that
\beq\label{gradthetasunstitution}H^i_{p_j}u^i_{x_jx_l}=g'(\theta^i)\theta^i_{x_l}-H^i_{x_l}-u^i_{x_l}-\sum_{j\neq i}\beta_\ep'(u^i-u^j-\psi^{ij})(u^i-u^j-\psi^{ij})_{x_l}.\eeq
The estimate \eqref{g'prop} in Assumption \ref{A4}  and the lower bound \eqref{thelowboundepasslem} on $\theta^i$
imply the existence of a positive constant, $C_0$ (depending on $\ep$), such that
\beq\label{g'thetathetalower}g'(\theta^i)\theta^i\ge C_0>0.\eeq 
Then, using \eqref{Hconvexity}, \eqref{growthH}, \eqref{beta'bounded}, \eqref{gradthetasunstitution},  \eqref{g'thetathetalower}, and the Cauchy-Schwarz inequality, we get
\beqs\begin{split}I_3&=(p+1)(\theta^i)^p\theta^i_{x_k}H^i_{p_j}(H^i_{p_kp_l}u^i_{x_lx_j}+H^i_{p_kx_j})\\&
=(p+1)g'(\theta^i)(\theta^i)^{p}H^i_{p_kp_l}\theta^i_{x_k}\theta^i_{x_l}+(p+1)(\theta^i)^p\theta^i_{x_k}(H^i_{p_j}H^i_{p_kx_j}-H^i_{p_kp_l}H^i_{x_l})\\&
-(p+1)(\theta^i)^pH^i_{p_kp_l}\theta^i_{x_k}\left(u^i_{x_l}+\sum_{j\neq i}\beta_\ep'(u^i-u^j-\psi^{ij})(u^i-u^j-\psi^{ij})_{x_l}\right)\\&
\ge (p+1)\gamma C_0(\theta^i)^{p-1}|D\theta^i|^2-C(p+1)(\theta^i)^p|D\theta^i|(1+|Du^i|^2)\\&
-C(p+1)(\theta^i)^p|D\theta^i|(1+|Du^i|)-\sum_{j\neq i}C(p+1)(\theta^i)^p|D\theta^i|(1+|Du^j|)\\&
\ge C(p+1)(\theta^i)^{p-1}|D\theta^i|^2-C(p+1)(\theta^i)^{p+1}\left(1+|Du^i|^4+\sum_{j\neq i}|Du^j|^2\right).
\end{split}
\eeqs

Next, we bound the left-hand side of \eqref{lemlipine1} from above. Using \eqref{growthH},  \eqref{beta'bounded}, and the Cauchy-Schwarz inequality, we obtain
\beqs\begin{split} &(\theta^i+\sum_{j\neq i}\beta_\ep'(u^i-u^j-\psi^{ij})\theta^i)\di ((\theta^i)^p D_p H^i)\\&
= [\theta^i+\sum_{j\neq i}\beta_\ep'(u^i-u^j-\psi^{ij})\theta^i][p(\theta^i)^{p-1}D_pH^i \cdot D\theta^i+(\theta^i)^pH^i_{p_kp_j}u^i_{x_jx_k}+(\theta^i)^pH^i_{p_kx_k}]\\&
\leq C\theta^i[p(\theta^i)^{p-1}|D_pH^i \cdot D\theta^i|+(\theta^i)^p |H^i_{p_kp_j}u^i_{x_jx_k}|+(\theta^i)^p|H^i_{p_kx_k}|]\\&
\le \frac{p}{2} (\theta^i)^{p-1}|D_pH^i \cdot D\theta^i|^2+\frac{\tilde{\gamma}^2}{2}(\theta^i)^{p+1}|D^2u^i|^2+Cp(\theta^i)^{p+1}(1+|Du^i|).
\end{split}
\eeqs
Similarly, 
\beqs\begin{split} &-\sum_{j\neq i}\beta_\ep'(u^j-u^i-\psi^{ji})\theta^j \di ((\theta^i)^p D_pH^i)\\&
\leq \sum_{j\neq i}C\theta^j\left|p(\theta^i)^{p-1}|D_pH^i \cdot D\theta^i|+(\theta^i)^p|H^i_{p_kp_j}u^i_{x_jx_k}|+(\theta^i)^p|H^i_{p_kx_k}|\right|\\&
\leq\sum_{j\neq i}
C\theta^j 
\left[
p (\theta^i)^\frac{p-1}{2} (\theta^i)^\frac{p-1}{2}\left|D_pH^i \cdot D\theta^i\right|+(\theta^i)^\frac{p-1}{2} (\theta^i)^\frac{p+1}{2}\left|H^i_{p_kp_j}u^i_{x_jx_k}\right|
+(\theta^i)^\frac{p-1}{2} (\theta^i)^\frac{p+1}{2}\left|H^i_{p_kx_k}\right|
\right]\\&
\leq \sum_{j\neq i}Cp(\theta^j)^2(\theta^i)^{p-1}+\frac{p}{2} (\theta^i)^{p-1}|D_pH^i \cdot D\theta^i|^2+\frac{\tilde{\gamma}^2}{2}(\theta^i)^{p+1}|D^2u^i|^2+Cp(\theta^i)^{p+1}(1+|Du^i|^2).
\end{split}
\eeqs
From the preceding estimates, we conclude that 
\beqs\begin{split}
&C (p+1)\intor(\theta^i)^{p-1}|D\theta^i|^2dx-C(p+1)\intor(\theta^i)^{p+1}(1+|Du^i|^4+\sum_{j\neq i}|Du^j|^2)dx\\&+p\intor(\theta^i)^{p-1}|D_pH^i\cdot D\theta^i|^2 dx\\&
+\tilde{\gamma}^2\intor (\theta^i)^{p+1}|D^2u^i|^2 -C\intor (\theta^i)^{p+1}(1+|Du^i|^2)dx\\&
\le \intor I_1+I_2+I_3dx\\&
=\intor [\theta^i+\sum_{j\neq i}\beta_\ep'(u^i-u^j-\psi^{ij})\theta^i-\sum_{j\neq i}\beta_\ep'(u^j-u^i-\psi^{ji})\theta^j]\di ((\theta^i)^p D_pH^i)dx\\&
\leq  p\intor (\theta^i)^{p-1}|D_pH^i\cdot D\theta^i|^2dx+\tilde{\gamma}^2\intor(\theta^i)^{p+1}|D^2u^i|^2dx\\&+Cp\intor(\theta^i)^{p+1}(1+|Du^i|^2)dx+\sum_{j\neq i}Cp\intor (\theta^j)^2(\theta^i)^{p-1}.
\end{split}
\eeqs 
Consequently, 
\beqs\begin{split} 
\intor(\theta^i)^{p-1}|D\theta^i|^2dx\leq \intor(\theta^i)^{p+1}(1+|Du^i|^4+\sum_{j\neq i}|Du^j|^2)dx+ \sum_{j\neq i}C\intor (\theta^j)^2(\theta^i)^{p-1}dx.
\end{split}
\eeqs 
Applying Young's inequality, we gather
\beqs (\theta^j)^2(\theta^i)^{p-1}\leq \frac{2}{p+1}(\theta^j)^{p+1}+\frac{p-1}{p+1}(\theta^i)^{p+1}.
\eeqs
Therefore, 
\beq\label{thetagradpropthmclaim1}\begin{split} 
\intor(\theta^i)^{p-1}|D\theta^i|^2dx\leq \intor(\theta^i)^{p+1}(1+|Du^i|^4+\sum_{j\neq i}|Du^j|^2)dx+\sum_{j\neq i}C\intor (\theta^j)^{p+1}dx.
\end{split}\eeq
If $N=1$,    \eqref{thetabounded} is a consequence of  Morrey's Theorem.
If  $N=2$, then $\theta^i\in L^p$ for all $p$. Moreover, in this case,  
the argument that follows can be modified by replacing the Sobolev exponent, $2^*,$ by any arbitrarily large number, $M$. 
Therefore, we assume that  $N>2$. Accordingly, 
by \eqref{w12alpthet},  we have $\theta^i\in L^{\frac{2^* (1+\alpha)}{2}}$. 
In addition, Sobolev's inequality provides the bound
\beq\label{sobolinthmlip}\begin{split} \left(\intor (\theta^i)^{\frac{p+1}{2}2^*}dx\right)^\frac{2}{2^*}&
\leq C\intor (\theta^i)^{p+1}dx+C\intor|D((\theta^i)^{\frac{p+1}{2}})|^2dx\\&
= C\intor (\theta^i)^{p+1}dx+C(p+1)^2\intor(\theta^i)^{p-1}|D\theta^i|^2dx.
\end{split}\eeq

Let $\beta:=\sqrt{\frac{2^*}{2}}=\sqrt{\frac{N}{N-2}}>1$. Consider first the {\bf P} case. Then, Assumption \ref{A7} implies that $$2\al\leq (\alpha+1)\beta^2\frac{\beta-1}{\beta}.$$
The previous inequality together with \eqref{Hquadratic} implies that 
\beqs|Du^i|^4\leq C(g(\theta^i))^2+C\leq C(\theta^i)^{2\al}+C\leq C(1+(\theta^i)^{(\alpha+1)\beta^2\frac{\beta-1}{\beta}}).\eeqs 
The same inequality holds in the logarithmic case with  $\al=0$:
 \beqs|Du^i|^4\leq C(g(\theta^i))^2+C\leq C(\log(\theta^i))^2+C\leq C(1+(\theta^i)^{\beta^2\frac{\beta-1}{\beta}}).\eeqs
 Therefore, from H{\"o}lder's inequality, we get
 \beqs\begin{split}
        &\intor (\theta^i)^{p+1}(1+|Du^i|^4)dx\leq C\intor (\theta^i)^{p+1}(1+(\theta^i)^{(\alpha+1)\beta^2\frac{\beta-1}{\beta}})dx\\&
 \leq C \intor (\theta^i)^{p+1}dx+C\left(\intor (\theta^i)^{(p+1)\beta}dx\right)^\frac{1}{\beta}\left(\intor (\theta^i)^{(\alpha+1)\beta^2} dx\right)^\frac{\beta-1}{\beta}\\&
 \leq C \intor (\theta^i)^{p+1}dx+C\left(\intor (\theta^i)^{(p+1)\beta}dx\right)^\frac{1}{\beta}\\&
 \leq C\left(\intor (\theta^i)^{(p+1)\beta}dx\right)^\frac{1}{\beta}.
\end{split} \eeqs
Similarly,
 \beqs\begin{split}&\intor (\theta^i)^{p+1}(1+|Du^j|^2)dx\leq C\intor (\theta^i)^{p+1}(1+(\theta^j)^{(\alpha+1)\beta^2\frac{\beta-1}{\beta}})dx\\&
 \leq C \intor (\theta^i)^{p+1}dx+C\left(\intor (\theta^i)^{(p+1)\beta}dx\right)^\frac{1}{\beta}\left(\intor (\theta^j)^{(\alpha+1)\beta^2} dx\right)^\frac{\beta-1}{\beta}\\&
 \leq C \intor (\theta^i)^{p+1}dx+C\left(\intor (\theta^i)^{(p+1)\beta}dx\right)^\frac{1}{\beta}\\&
 \leq C\left(\intor (\theta^i)^{(p+1)\beta}dx\right)^\frac{1}{\beta}.
\end{split} \eeqs

The last two inequalities, combined with \eqref{thetagradpropthmclaim1} and \eqref{sobolinthmlip} give the bound 
\beqs\left(\intor(\theta^i)^{(p+1)\beta^2}dx\right)^\frac{1}{\beta^2}\leq Cp^2 \left(\intor(\theta^i)^{(p+1)\beta}dx\right)^\frac{1}{\beta}+Cp^2\sum_{j\neq i} \left(\intor(\theta^j)^{(p+1)\beta}dx\right)^\frac{1}{\beta}
\eeqs
for $i=1,\ldots,d$.
Summing on $i$, we finally obtain
\beq\label{thetaboundedbeta}\sum_{i=1}^d\left(\intor(\theta^i)^{(p+1)\beta^2}dx\right)^\frac{1}{\beta^2}\leq Cp^2 \sum_{i=1}^d\left(\intor(\theta^i)^{(p+1)\beta}dx\right)^\frac{1}{\beta}.\eeq
Arguing as in \cite{E1}, we get 
$$\sum_{i=1}^d\|\theta^i\|_{L^\infty(\Tt^N)}\leq C$$ and, hence,
  \eqref{thetabounded}. 
%
\end{proof}
 
\begin{cor}
        Suppose that Assumptions \ref{A1}-\ref{A5}  and \ref{B1} hold, and either
\begin{itemize}
        \item[-]  Assumption \ref{A5} {\bf L} or
        \item[-]  Assumptions \ref{A5} {\bf P-$\frac 2 N$}, \ref{A6} and \ref{A7} 
\end{itemize}   
        hold. Let $(u,\theta)$ be a $C^\infty$ solution of \eqref{obstacleepmfg1sys}-\eqref{obstacleepmfg2sys}.
        Then, for any $k\in\N,$  there exists $C_{\ep,k}>0$ that does not depend on the particular solution, such that
        \beq\label{cinftyestimate}\|u^i\|_{W^{k,\infty}(\Tt^N)}+\|\theta^i\|_{W^{k,\infty}(\Tt^N)}\leq C_{\ep,k},\eeq
        for any $i=1,\ldots,d$.
\end{cor}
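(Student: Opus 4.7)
The plan is a standard bootstrap regularity argument, starting from the $L^\infty$ bound on $\theta^i$ and the $W^{1,\infty}$ bound on $u^i$ established in Lemma \ref{liplem}, together with the strictly positive lower bound $\theta^i \ge \theta_0 > 0$ coming from Proposition \ref{prolog} (under Assumption \ref{A5} \textbf{L}) or Proposition \ref{MEst} (under Assumptions \ref{A5} \textbf{P-}$\tfrac{2}{N}$, \ref{A6}, \ref{A7}). Crucially, these latter two estimates depend only on the data, while the Lipschitz-type bounds from Lemma \ref{liplem} depend on $\epsilon$; both enter the constants $C_{\epsilon,k}$ below.

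The first step is to recognize that, although \eqref{obstacleepmfg1sys}--\eqref{obstacleepmfg2sys} looks like a first-order Hamilton--Jacobi equation coupled with a transport-type equation, the system is effectively uniformly elliptic in $u^i$. To see this, differentiate \eqref{obstacleepmfg1sys} in $x_k$ to obtain an algebraic expression for $\theta^i_{x_k}$ in terms of $u^i$, $Du^i$, $D^2u^i$, and the $u^j$'s, using that $g'(\theta^i) \ge c > 0$ (thanks to the lower bound on $\theta^i$ and Assumption \ref{A5}). Substituting this expression for $D\theta^i$ into the expanded form of \eqref{obstacleepmfg2sys} and dividing by $\theta^i > 0$ yields a quasilinear second-order equation for $u^i$ of the form
\[
-\Bigl(H^i_{p_kp_l}(Du^i,x) + \frac{1}{\theta^i g'(\theta^i)} H^i_{p_k}(Du^i,x) H^i_{p_l}(Du^i,x)\Bigr) u^i_{x_kx_l} = F^i(x,u,Du,\theta),
\]
whose leading coefficient matrix is uniformly positive definite by \eqref{Hconvexity} plus a positive rank-one correction, and whose right-hand side $F^i$ is bounded in $L^\infty$ under the hypotheses of Lemma \ref{liplem}.

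The second step is the iteration itself. Since $\theta^i$ is bounded away from zero and $g \in C^\infty$, the Hamilton--Jacobi equation \eqref{obstacleepmfg1sys} gives the explicit relation
\[
\theta^i = g^{-1}\Bigl(H^i(Du^i,x) + u^i + \sum_{j\ne i} \beta_\ep(u^i - u^j - \psi^{ij})\Bigr),
\]
so any gain of regularity for $u$ immediately transfers to $\theta$ (with one fewer derivative). Starting from $u^i \in W^{1,\infty}$ and $\theta^i \in L^\infty$, Calder\'on--Zygmund $L^p$ theory applied to the uniformly elliptic equation above gives $u^i \in W^{2,p}(\Tt^N)$ for every $p<\infty$, hence $u^i \in C^{1,\gamma}(\Tt^N)$. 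The above formula for $\theta^i$ then promotes $\theta^i$ to $C^\gamma(\Tt^N)$. With H\"older coefficients and right-hand side, Schauder estimates yield $u^i \in C^{2,\gamma}(\Tt^N)$, hence $\theta^i \in C^{1,\gamma}(\Tt^N)$, and so on. Repeating this cycle $k$ times and using the smoothness of $H^i$, $g$, $\psi^{ij}$, and $\beta_\ep$ produces the bound \eqref{cinftyestimate}, with constants $C_{\epsilon,k}$ absorbing the $\epsilon$-dependence inherited from Lemma \ref{liplem} and the size of the derivatives of $\beta_\ep$.

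The only delicate point is the very first step: one must verify that, after the substitution for $D\theta^i$, the resulting quasilinear operator on $u^i$ is genuinely uniformly elliptic with coefficients that inherit the regularity of $u$ and $\theta$, and that the cross-terms coming from the indices $j \ne i$ (which couple the components) can be treated as lower-order perturbations at each stage of the bootstrap. Once the ellipticity of the $u$-equation is in place, the remainder is routine elliptic regularity on $\Tt^N$, iterated componentwise and finalised by Sobolev embedding to pass from $W^{k,p}$ to $W^{k,\infty}$.
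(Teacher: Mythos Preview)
Your proposal is correct and follows essentially the same route as the paper's proof: substitute $\theta^i = g^{-1}\bigl(H^i(Du^i,x)+u^i+\sum_{j\ne i}\beta_\ep(u^i-u^j-\psi^{ij})\bigr)$ into \eqref{obstacleepmfg2sys} to obtain a quasilinear equation for $u^i$ with principal part $-\theta^i H^i_{p_kp_l}u^i_{x_kx_l}-\frac{1}{g'(\theta^i)}H^i_{p_k}H^i_{p_l}u^i_{x_kx_l}$, observe that the lower bound on $\theta^i$ and the bounds from Lemma~\ref{liplem} make this uniformly elliptic with bounded right-hand side, and then bootstrap via $L^p$ elliptic regularity. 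The paper's write-up is terser (it omits the explicit Schauder step and simply invokes ``repeated differentiation and a bootstrapping argument''), but the mechanism is identical.
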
       
\begin{proof}
If Assumption \ref{A5} {\bf L} holds or if
 Assumptions \ref{A5} {\bf P-$\frac 2 N$} and \ref{A6} hold 
by, respectively,  Propositions \ref{prolog}, 
and \ref{MEst}, 
then there exists $\theta_0>0$, 
 such that, for any $i=1,\ldots,d$ and any $x\in \Tt^N$, 
 \beq\label{theta>theta0anyorderlem}\theta^i(x)\geq \theta_0>0.
 \eeq
Thus, we use
$$\theta^i=g^{-1}\left(H^i(Du^i,x)+u^i+\sum_{j\ne i}\beta_\ep(u^i-u^j-\psi^{ij})\right)$$
in \eqref{obstacleepmfg2sys} to get
\beqs\begin{split}&-\theta^i H^i_{p_kp_j}u^i_{x_jx_k}-\theta^i H^i_{p_kx_k}-\frac{1}{g'(\theta^i)}\big[H^i_{p_k}H^i_{p_j}u^i_{x_jx_k}+H^i_{x_k}H^i_{p_k}+H^i_{p_k}u^i_{x_k}\\&+
\sum_{j\neq i}\beta_\ep'(u^i-u^j-\psi^{ij})(u^i-u^j-\psi^{ij})_{x_k}H^i_{p_k}\big]+\theta^i+\sum_{j\neq i}\beta_\ep'(u^i-u^j-\psi^{ij})\theta^i\\&
=\sum_{j\neq i}\beta_\ep'(u^j-u^i-\psi^{ji})\theta^j+1.
\end{split}\eeqs
From estimates \eqref{thetabounded},  \eqref{lipepest}, and  \eqref{theta>theta0anyorderlem}, 
we have that the previous equation is a uniformly elliptic equation for each $i$.
Therefore, from the elliptic regularity theory, we infer that 
$$\|u^i\|_{W^{2,p}(\Tt^N)}+\|\theta^i\|_{W^{1,p}(\Tt^N)}\leq C_{\ep}$$ for any $1<p<\infty$. Repeated differentiation and a bootstrapping argument give 
\eqref{cinftyestimate}. 
\end{proof}

\section{Proof of Theorem \ref{main2}}
\label{pmain2}

In this  section, 
we show the existence and uniqueness of a  classical solution of \eqref{obstacleepmfg1sys}-\eqref{obstacleepmfg2sys}.
In the proof of existence, we use the continuation method. In  the proof of uniqueness, we rely on a monotonicity argument. 
Here, we work under Assumptions \ref{A1}-\ref{A4} and either
\ref{A5} {\bf L} or \ref{A5} {\bf P-}$\frac 2 N$ together with Assumptions
\ref{A6} and \ref{A7}. 

\subsection{Existence}
 To prove  the existence of a  classical solution of \eqref{obstacleepmfg1sys}-\eqref{obstacleepmfg2sys}  using the continuation method, we define\beqs H^i_\lambda(p,x):=\lambda H^i(p,x)+(1-\lambda)\frac{|p|^2}{2}\eeqs 
for $0\leq \lambda\leq 1$, $(p,x)\in\R^N\times \Tt^N$, and  $i=1,\ldots,d$. We introduce the mean-field game
\beq\label{obstacleepmfg1syslam} H^i_\lambda(Du_\lambda^i,x)+u_\lambda^i+\sum_{j\ne i}\beta_\ep(u_\lambda^i-u_\lambda^j-\psi^{ij}) =g(\theta_\lambda^i),\eeq
\beq\label{obstacleepmfg2syslam} -\di (D_p H_\lambda^i(Du_\lambda^i,x)\theta_\lambda^i)+\theta_\lambda^i
+\sum_{j\ne i}\beta'_\ep(u_\lambda^i-u_\lambda^j-\psi^{ij}) \theta^i_\lambda -\beta'_\ep(u_\lambda^j-u_\lambda^i-\psi^{ji}) \theta^j_\lambda =1 
\eeq
for $x\in\Tt^N$ and $i=1,\ldots, d$.

Next, for $k\in\N$, we set $E^k:=(H^{k}(\Tt^N))^{d}$, $E^0:=  (L^2(\Tt^N))^{d}$. If $k>\frac{N}{2}$, $E^k$ is an algebra. Moreover,    $E^k\subset \left(C^\gamma(\Tt^N)\right)^d$ for any 
 $0<\gamma<2-\frac{N}{k}$. Given $\theta_0>0$ and  $k>\frac{N}{2}$, we set
 $$E^k_{\theta_0}:=\{\theta \in E^k\,|\, \theta^i\geq\theta_0,\,i=1,\ldots, d\}.$$
Finally, for $k>\frac{N}{2}$, we define  $F:[0,1]\times E^{k+2}\times E_{\theta_0}^{k+1}\to E^k\times E^{k+1}$ by 
\beqs\begin{split} &F(\lambda, u, \theta):=\\&\left(\begin{array}{ccc}
\di (D_p H_\lambda^i(Du_\lambda^i,x)\theta_\lambda^i)-\theta_\lambda^i
-\sum_{j\ne i}(\beta'_{\ep,\lambda}(u_\lambda^i-u_\lambda^j-\psi^{ij}) \theta^i_\lambda -\beta'_{\ep,\lambda}(u_\lambda^j-u_\lambda^i-\psi^{ji}) \theta^j_\lambda) +1\\
H^i_\lambda(Du_\lambda^i,x)+u_\lambda^i+\sum_{j\ne i}\beta_\ep(u_\lambda^i-u_\lambda^j-\psi^{ij}) -g(\theta_\lambda^i)
\end{array}\right).
\end{split}
\eeqs
Then, \eqref{obstacleepmfg1syslam}-\eqref{obstacleepmfg2syslam} is equivalent to
$$F(\lambda, u_\lambda, \theta_\lambda)=0.$$
Let
\beqs\Lambda:=\{\lambda\in[0,1]\,|\,\text{ \eqref{obstacleepmfg1syslam}-\eqref{obstacleepmfg2syslam} has a classical solution } (u_\lambda,\theta_\lambda)\}.\eeqs 
Next, we show that $$\Lambda=[0,1].$$ 
We divide the proof of this identity into 
the three following claims. 
\bigskip

{\em Claim 1:}  $0\in\Lambda$. Indeed, for $\lambda=0,$ we have the explicit solution:
$$u_0^i=g(1),\quad\theta_0^i=1,\quad i=1,\ldots, d.$$
\bigskip

{\em Claim 2:  $\Lambda$ is closed.} To prove this claim, we show that, for any sequence, $(\lambda_k)_k\subset\Lambda,$ such that $\lambda_k\to\lambda_0$ 
as $k\to+\infty$, we have that $\lambda_0\in\Lambda$. Accordingly, let $(u_{\lambda_k},\theta_{\lambda_k})$ be a classical solution of  
\eqref{obstacleepmfg1syslam}-\eqref{obstacleepmfg2syslam} for $\lambda=\lambda_k$. Recall that  $H_\lambda$ satisfies Assumptions  \ref{A1}-\ref{A4} and Assumption \ref{A6} uniformly in $0\leq \lambda\leq 1$.
Therefore, by \eqref{cinftyestimate}, we can bound the derivatives of any order of $(u_{\lambda_k},\theta_{\lambda_k})$ by a constant that is independent of $k$. Consequently, we can extract a subsequence of smooth solutions converging to a limit function $(u,\theta)$ that solves  \eqref{obstacleepmfg1syslam}-\eqref{obstacleepmfg2syslam} with $\lambda=\lambda_0$. Thus, $\lambda_0\in\Lambda$. 
\bigskip

{\em Claim 3: $\Lambda$ is open.} To prove this last claim, we need to check that, for any $\lambda_0\in\Lambda,$ there exists a neighborhood of $\lambda_0$ contained in $\Lambda$.
To do so, we use the implicit function theorem.  To simplify the notation, for $h=\beta,\beta',\beta''$, we set
 $$h_{\ep,\lambda_0}(i,j):=h_{\ep}(u_{\lambda_0}^i-u_{\lambda_0}^j-\psi^{ij}).$$
For $\lambda_0\in\Lambda$, we consider the Fr\'{e}chet derivative, $\mathcal{L}_{\lambda_0}:E^{k+2}\times E^{k+1}\to E^k\times E^{k+1}$, of $(u,\theta)\to F(\lambda_0,u,\theta)$ at 
$(u_{\lambda_0},\theta_{\lambda_0})$. We have
\beq
\label{ELLF}
\begin{split}
        &\mathcal{L}_{\lambda_0}(v,f)\\&
=\left(\begin{array}{ccc}
(H^i_{\lambda_0,p_kp_j}(Du^i_{\lambda_0},x)v^i_{x_j}\theta_{\lambda_0}^i+H^i_{\lambda_0,p_k}(Du^i_{\lambda_0},x)f^i)_{x_k}-f^i
\\-\sum_{j\neq i}[(\beta''_{\ep,\lambda_0}(i,j)\theta_{\lambda_0}^i+\beta''_{\ep,\lambda_0}(j,i)\theta_{\lambda_0}^j)(v^i-v^j)+\beta'_{\ep,\lambda_0}(i,j)f^i-\beta'_{\ep,\lambda_0}(j,i)f^j]\\
\,\\
D_pH^i_{\lambda_0}(Du^i_{\lambda_0},x)\cdot Dv^i+v^i+\sum_{j\neq i}(\beta'_{\ep,\lambda_0}(i,j)(v^i-v^j))-g'(\theta^i_{\lambda_0})f^i\
\end{array}
\right).
\end{split}
\eeq

Because of the a priori bounds for smooth solutions \eqref{cinftyestimate}
and either estimate \eqref{lowerboundthelog}, in the {\bf L} case, or \eqref{theta>1power}, in the {\bf P-}$\frac 2 N$ case, the operator, \(\mathcal{L}_{\lambda_0}\) , is  well defined in 
$E^{k+2}\times E^{k+1}$ for any $k\geq 0$. Next, we
prove that $\mathcal{L}_{\lambda_0}$ is an isomorphism from $E^{k+2}\times E^{k+1}$ to $E^{k}\times E^{k+1}$ for any $k\geq 0$. 

Let $w=(v,f)\in E^1\times E^0$. Define the bilinear form, $B_{\lambda_0}[w_1,w_2]:E^1\times E^0\to \R,$ by
\beq
\label{bdef}
 B_{\lambda_0}[w_1,w_2]:=\sum_{i=1}^d B^i_{\lambda_0}[w_1,w_2],
 \eeq
  where
\beqs\begin{split}& B^i_{\lambda_0}[w_1,w_2]:=\\&
\intor-H^i_{\lambda_0,p_kp_j}(Du^i_{\lambda_0},x)v^i_{1,x_j}v_{2,x_k}^i\theta_{\lambda_0}^i-D_pH^i_{\lambda_0}(Du^i_{\lambda_0},x)\cdot Dv_{2}^i f_1^i-f^i_1v^i_2dx\\&
-\intor\Big[\sum_{j\neq i}(\beta''_{\ep,\lambda_0}(i,j)\theta_{\lambda_0}^i+\beta''_{\ep,\lambda_0}(j,i)\theta_{\lambda_0}^j)(v_1^i-v_1^j)+\beta'_{\ep,\lambda_0}(i,j)f_1^i-\beta'_{\ep,\lambda_0}(j,i)f_1^j\Big]v^i_2 dx\\&
+\intor [D_pH^i_{\lambda_0}(Du^i_{\lambda_0},x)\cdot Dv^i_1+v_1^i+\sum_{j\neq i}(\beta'_{\ep,\lambda_0}(i,j)(v_1^i-v_1^j))-g'(\theta^i_{\lambda_0})f_1^i]f^i_2dx.
 \end{split}\eeqs
If $w_1\in E^{k+2}\times  E^{k+1}$ with $k\geq 0$, then 
$$ B_{\lambda_0}[w_1,w_2]=\intor \mathcal{L}_{\lambda_0}(w_1)\cdot w_2 dx.$$
The following lemma  is a straightforward consequence  of estimate  \eqref{cinftyestimate} combined with  either  \eqref{lowerboundthelog} or \eqref{theta>1power}.
\begin{lem} Let $B$ be the bilinear form given by \eqref{bdef}. Then, there exists $C>0$ such that 
\beqs |B_{\lambda_0}[w_1,w_2]|\leq C\|w_1\|_{E^1\times E^0}\|w_2\|_{E^1\times E^0}\eeqs
for any $w_1,w_2\in E^1\times E^0$.
\end{lem}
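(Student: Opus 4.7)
The plan is to bound each of the eight kinds of terms appearing in the definition of $B^i_{\lambda_0}[w_1,w_2]$ by a product of $E^1\times E^0$ norms, using the Cauchy--Schwarz inequality. The essential ingredient is that every coefficient appearing in the expression lies in $L^\infty(\Tt^N)$, with a bound depending only on $\ep$ and $\lambda_0$.

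First, I would verify the uniform bounds on the coefficients. From \eqref{cinftyestimate}, applied at any fixed $\ep$, the functions $u_{\lambda_0}^i$ and $\theta_{\lambda_0}^i$ together with their derivatives up to any prescribed order are bounded in $L^\infty(\Tt^N)$. Hence $\theta_{\lambda_0}^i\in L^\infty(\Tt^N)$, $H^i_{\lambda_0,p_kp_j}(Du^i_{\lambda_0}(\cdot),\cdot)\in L^\infty(\Tt^N)$, and $D_pH^i_{\lambda_0}(Du^i_{\lambda_0}(\cdot),\cdot)\in L^\infty(\Tt^N)$ (using also the smoothness of $H^i_{\lambda_0}$ and its derivatives in $p$ and $x$). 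Since $u^i_{\lambda_0}-u^j_{\lambda_0}-\psi^{ij}$ is bounded on $\Tt^N$ and $\beta_\ep',\beta_\ep''$ are smooth, the quantities $\beta'_{\ep,\lambda_0}(i,j)$ and $\beta''_{\ep,\lambda_0}(i,j)$ lie in $L^\infty(\Tt^N)$. Finally, \eqref{lowerboundthelog} (case \textbf{L}) or \eqref{theta>1power} (case \textbf{P-}$\frac{2}{N}$) gives $\theta^i_{\lambda_0}\geq\theta_0>0$ on $\Tt^N$, and combined with the upper bound on $\theta_{\lambda_0}^i$ and the smoothness of $g'$ on $[\theta_0,\infty)$, we conclude that $g'(\theta_{\lambda_0}^i)\in L^\infty(\Tt^N)$. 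Collect all these $L^\infty$ bounds into a single constant $M=M(\ep,\lambda_0)$.

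Next, for each of the terms in $B^i_{\lambda_0}[w_1,w_2]$ I would apply Cauchy--Schwarz after pulling out the $L^\infty$ factors. Writing $\|\cdot\|_2$ for the $L^2(\Tt^N)$ norm, the representative estimates are
\[
\left|\intor H^i_{\lambda_0,p_kp_j}v^i_{1,x_j}v^i_{2,x_k}\theta_{\lambda_0}^i\,dx\right|\le M\|Dv_1^i\|_2\|Dv_2^i\|_2,
\]
\[
\left|\intor D_pH^i_{\lambda_0}\cdot Dv_2^i\, f_1^i\,dx\right|\le M\|Dv_2^i\|_2\|f_1^i\|_2,\qquad\left|\intor f_1^iv_2^i\,dx\right|\le\|f_1^i\|_2\|v_2^i\|_2,
\]
\[
\left|\intor g'(\theta_{\lambda_0}^i)f_1^if_2^i\,dx\right|\le M\|f_1^i\|_2\|f_2^i\|_2,
\]
and the terms carrying $\beta'_{\ep,\lambda_0}$ or $\beta''_{\ep,\lambda_0}$ are treated identically, yielding bounds of the form $M\|v_1\|_2\|v_2\|_2$, $M\|f_1\|_2\|v_2\|_2$, $M\|v_1\|_2\|f_2\|_2$. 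Each factor is dominated by either $\|v_k^i\|_{H^1(\Tt^N)}$ or $\|f_k^i\|_{L^2(\Tt^N)}$.

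Finally, summing the resulting bounds over $i=1,\ldots,d$ and all the terms listed, each contribution is dominated by
\[
C\,\bigl(\|v_1\|_{(H^1)^d}+\|f_1\|_{(L^2)^d}\bigr)\bigl(\|v_2\|_{(H^1)^d}+\|f_2\|_{(L^2)^d}\bigr)=C\|w_1\|_{E^1\times E^0}\|w_2\|_{E^1\times E^0},
\]
which is the claim. There is no genuine obstacle here; the only point requiring care is to make sure that $g'(\theta_{\lambda_0}^i)$ is bounded, which is why the hypothesis of either case \textbf{L} or case \textbf{P-}$\frac{2}{N}$ (through the lower bound on $\theta_{\lambda_0}^i$) is invoked. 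Everything else is a direct consequence of \eqref{cinftyestimate} and Cauchy--Schwarz.
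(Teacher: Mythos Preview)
Your proof is correct and matches the paper's approach: the paper simply states that the lemma is a straightforward consequence of \eqref{cinftyestimate} together with either \eqref{lowerboundthelog} or \eqref{theta>1power}, and your write-up spells out exactly this argument via $L^\infty$ bounds on the coefficients and Cauchy--Schwarz.
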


Thus, in view of Riesz's representation theorem for Hilbert spaces, there exists a continuous linear mapping, $A:E^1\times E^0\to E^1\times E^0,$ such that 
\beq\label{BA}B_{\lambda_0}[w_1,w_2]=(Aw_1,w_2)_{E^1\times E^0}.\eeq
\begin{lem}\label{Ainjectivelem}  The operator, $A,$ defined in \eqref{BA} is injective.
\end{lem}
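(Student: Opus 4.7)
The plan is to exploit the monotone structure of the linearized system. Assume $Aw=0$ for some $w=(v,f)\in E^1\times E^0$. By \eqref{BA} this is equivalent to $B_{\lambda_0}[w,w_2]=0$ for every $w_2\in E^1\times E^0$. I would first extract information by choosing $w_2=w$, and then sharpen the conclusion with carefully selected test functions.

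Computing $B_{\lambda_0}[w,w]$, the adjoint pairings built into the definition of $\mathcal{L}_{\lambda_0}$ make two pairs of terms cancel immediately: the contributions $-D_pH^i_{\lambda_0}\cdot Dv^i\,f^i$ (from the first row tested with $v^i$) cancel against $+D_pH^i_{\lambda_0}\cdot Dv^i\,f^i$ (from the second row tested with $f^i$), and likewise $-f^iv^i$ cancels $+v^if^i$. For the switching terms, I would symmetrize by swapping the indices $i\leftrightarrow j$ in the appropriate sums: the $\beta'$-cross terms combine to give $\sum_{i\ne j}\!\int\beta'_{\ep,\lambda_0}(i,j)(v^i-v^j)f^i-\sum_{i\ne j}\!\int\beta'_{\ep,\lambda_0}(i,j)f^i(v^i-v^j)=0$, while the $\beta''$-terms collapse into $-\sum_{i\ne j}\!\int\beta''_{\ep,\lambda_0}(i,j)\,\theta^i_{\lambda_0}(v^i-v^j)^2$. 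The net identity is
\[
B_{\lambda_0}[w,w]=-\sum_{i=1}^d\intor\!\bigl[H^i_{\lambda_0,p_kp_j}v^i_{x_j}v^i_{x_k}\theta^i_{\lambda_0}+g'(\theta^i_{\lambda_0})(f^i)^2\bigr]dx-\!\!\sum_{i\ne j}\!\intor\!\beta''_{\ep,\lambda_0}(i,j)\,\theta^i_{\lambda_0}(v^i-v^j)^2 dx.
\]
All three terms are nonpositive because of the uniform convexity \eqref{Hconvexity}, the strict positivity $g'>0$, and $\beta''_{\ep}\ge 0$.

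Setting this expression to zero and using the lower bound $\theta^i_{\lambda_0}\ge\theta_0>0$ (from \eqref{lowerboundthelog} in the logarithmic case or \eqref{theta>1power} in the power case), the convexity estimate \eqref{Hconvexity} forces $Dv^i\equiv 0$, so each $v^i$ is a constant on the connected torus, and $g'(\theta^i_{\lambda_0})(f^i)^2=0$ forces $f^i\equiv 0$.

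With $Dv\equiv 0$ and $f\equiv 0$ in hand, I would return to $B_{\lambda_0}[w,w_2]=0$ and test with $v_2=0$ and $f_2^i$ arbitrary in $L^2$; only the last line of $B^i_{\lambda_0}$ survives, yielding the pointwise identity
\[
v^i+\sum_{j\ne i}\beta'_{\ep,\lambda_0}(i,j)(x)\,(v^i-v^j)=0\qquad\text{a.e. in }\Tt^N,\ i=1,\dots,d.
\]
A maximum principle on the finite set of constants $\{v^1,\dots,v^d\}$ then finishes the argument: if $M=\max_i v^i$ is attained at $i_0$, then $M=-\sum_{j\ne i_0}\beta'_{\ep,\lambda_0}(i_0,j)(x)(M-v^j)\le 0$ because each summand is nonnegative; the analogous argument at the minimizing index gives $m=\min_i v^i\ge 0$. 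Since $m\le M$, both equal zero, so $v\equiv 0$ and $w=0$.

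The main technical point is the index-swapping bookkeeping that exhibits the cancellation of the $\beta'$-cross terms and the nonnegative structure of the $\beta''$-term; once that algebraic identity is in place, the remaining steps (Poincar\'e-type vanishing and the discrete max principle) are routine given the a priori lower bound on $\theta$.
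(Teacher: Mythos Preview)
Your proof is correct and the first step---the computation of $B_{\lambda_0}[w,w]$ via the index-swapping cancellations---is exactly what the paper does. The only difference is in the endgame: after establishing $Dv=0$, $f=0$, the paper tests with the single vector $w_2=(0,\mu)$ and reads off $B[(\mu,0),(0,\mu)]=|\mu|^2$ (the $\beta'$ cross terms there vanish because the $\beta''$ term in $B[w,w]$ already forces $\beta''_{\ep,\lambda_0}(i,j)(\mu^i-\mu^j)=0$, and $\beta'_\ep\le C\beta''_\ep$ from Assumption~\ref{B1} then kills the corresponding $\beta'$ contributions). You instead test with arbitrary $(0,f_2)$, extract the pointwise system $\mu^i+\sum_{j\ne i}\beta'_{\ep,\lambda_0}(i,j)(\mu^i-\mu^j)=0$, and run a discrete maximum principle. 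Your route is slightly more self-contained in that it does not rely on the relation $\beta'_\ep\le C\beta''_\ep$, while the paper's is a one-line computation once that relation is invoked; either way the argument is short and sound.
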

\begin{proof}
Let $w=(v,f)$. Then, we have
\beqs\begin{split}
&B^i_{\lambda_0}[w,w]=
\intor-H^i_{\lambda_0,p_kp_j}(Du^i_{\lambda_0},x)v^i_{x_j}v_{x_k}^i\theta_{\lambda_0}^i\\&
-\intor\sum_{j\neq i}(\beta''_{\ep,\lambda_0}(i,j)\theta_{\lambda_0}^i+\beta''_{\ep,\lambda_0}(j,i)\theta_{\lambda_0}^j)(v^i-v^j)v^i+g'(\theta^i_{\lambda_0})(f^i)^2dx.
\end{split}\eeqs
Summing the preceding expression on $i$, using the identity 
\begin{align*}
&\sum_i\sum_{j\neq i}(\beta''_{\ep,\lambda_0}(i,j)\theta_{\lambda_0}^i+\beta''_{\ep,\lambda_0}(j,i)\theta_{\lambda_0}^j)(v^i-v^j)v^i\\
&=\sum_i\sum_{j\neq i}\beta''_{\ep,\lambda_0}(i,j)\theta_{\lambda_0}^i (v^i-v^j)^2,
\end{align*}
 the convexity property \eqref{Hconvexity} from Assumption \ref{A4} and either
\eqref{lowerboundthelog}, in the {\bf L} case, or \eqref{theta>1power}, in the {\bf P-}$\frac 2 N$ case,
we get 
\beq\begin{split}
\label{u1}
B_{\lambda_0}[w,w]&=\sum_{i=1}^d B^i_{\lambda_0}[w,w]\\&
\leq -\sum_{i=1}^d\intor H^i_{\lambda_0,p_kp_j}(Du^i_{\lambda_0},x)v^i_{x_j}v_{x_k}^i\theta_{\lambda_0}^i +g'(\theta^i_{\lambda_0})(f^i)^2dx\\&
-\sum_i\sum_{j\neq i} \intor\beta''_{\ep,\lambda_0}(i,j)\theta_{\lambda_0}^i (v^i-v^j)^2
\\&
\leq -C_{\lambda_0} \intor \|Dv\|^2+\|f\|^2 dx\\&
-\theta_0\sum_i\sum_{j\neq i}\int_{\Tt^N}\beta''_{\ep,\lambda_0}(i,j)(v^i-v^j)^2\, dx.
\end{split}
\eeq
According to the previous inequality, if $Aw=0$, we have $w=(\mu,0)$ for some $\mu\in\R^d$. Next, by computing
$$0=(Aw,(0,\mu))=B[(\mu,0),(0,\mu)]=\sum_{i=1}^d\intor v_1^if_2^idx=|\mu|^2,$$
we conclude that $\mu=0$.
\end{proof}

\begin{lem}\label{Asurjlem}  The operator, $A,$ given by \eqref{BA} is surjective. 
\end{lem}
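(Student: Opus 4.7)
The plan is to show that $A$ is a compact perturbation of an isomorphism, so that the Fredholm alternative, combined with the injectivity established in Lemma \ref{Ainjectivelem}, yields surjectivity. The key observation is that the estimate \eqref{u1} already provides a \emph{seminorm} coercivity for $-B_{\lambda_0}$; the missing piece is control of $\|v\|_{L^2}$, which can only be recovered up to a compact term.

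Concretely, I would introduce the perturbed form $\tilde{B}[w_1,w_2]:=-B_{\lambda_0}[w_1,w_2]+\int_{\Tt^N} v_1\cdot v_2\,dx$ on $E^1\times E^0$. By the previous lemma, $\tilde{B}$ is continuous on $E^1\times E^0$. Using the uniform convexity \eqref{Hconvexity}, the lower bound on $\theta_{\lambda_0}^i$ coming from \eqref{lowerboundthelog} in the {\bf L} case or \eqref{theta>1power} in the {\bf P-$\frac{2}{N}$} case (together with the upper bound \eqref{thetabounded}, which forces $g'(\theta_{\lambda_0}^i)\geq c_\epsilon>0$ in both cases), and the nonnegativity of the $\beta''_{\epsilon,\lambda_0}(i,j)(v^i-v^j)^2$ terms, the computation leading to \eqref{u1} yields
\begin{equation*}
\tilde{B}[w,w]\geq C\int_{\Tt^N}\bigl(|Dv|^2+|f|^2\bigr)dx+\int_{\Tt^N}|v|^2\,dx\geq C\|w\|_{E^1\times E^0}^2.
\end{equation*}
Hence $\tilde{B}$ is coercive on $E^1\times E^0$, and by Lax--Milgram the operator $\tilde{A}:E^1\times E^0\to E^1\times E^0$ defined by $\tilde{B}[w_1,w_2]=(\tilde{A}w_1,w_2)_{E^1\times E^0}$ is a continuous isomorphism.

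Next, I would write $-A=\tilde{A}-K$, where $K:E^1\times E^0\to E^1\times E^0$ is the bounded operator associated with the bilinear form $(w_1,w_2)\mapsto\int_{\Tt^N} v_1\cdot v_2\,dx$. Since this form factors through the compact embedding $(H^1(\Tt^N))^d\hookrightarrow(L^2(\Tt^N))^d$ given by the Rellich--Kondrachov theorem, $K$ is compact. Consequently $-A$ is a compact perturbation of the isomorphism $\tilde{A}$, hence Fredholm of index zero. By Lemma \ref{Ainjectivelem}, $A$ (and thus $-A$) is injective, and the Fredholm alternative gives surjectivity of $-A$, hence of $A$.

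The main technical point to watch carefully is the coercivity estimate: one must check that all the relevant constants remain bounded away from zero at the fixed $\lambda_0$ (in particular, that $g'(\theta_{\lambda_0}^i)$ is bounded below on the range of $\theta_{\lambda_0}^i$ in both the logarithmic and the power case), and that the sign conventions from the decomposition of $B_{\lambda_0}$ in Lemma \ref{Ainjectivelem} are tracked correctly when combined with the added $L^2$ term. Once this is in place, the Fredholm argument is standard.
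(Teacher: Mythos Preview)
Your argument is correct and constitutes a genuinely different route from the paper's proof. The paper proceeds by a direct closed-range argument: it takes a Cauchy sequence $z_n=Aw_n$ in $R(A)$ and shows $w_n$ is Cauchy in $E^1\times E^0$. The estimate \eqref{u1} controls $\|D(v_n-v_m)\|_{E^0}$ and $\|f_n-f_m\|_{E^0}$, but not $\|v_n-v_m\|_{E^0}$; to recover the latter the paper performs an additional computation, pairing $w_n-w_m$ against the test element $(0,v_n-v_m)$ in $B_{\lambda_0}$ and using the $\beta'_\epsilon\leq C\beta''_\epsilon$ condition from Assumption~\ref{B1}. Once $R(A)$ is closed, the paper finishes by noting that any $z\in R(A)^\perp$ satisfies $B_{\lambda_0}[w,z]=0$ for all $w$, and the same argument as in Lemma~\ref{Ainjectivelem} forces $z=0$.

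Your approach replaces this hands-on closed-range argument by the Fredholm alternative: you absorb the missing $\|v\|_{L^2}^2$ control into a compact perturbation $K$ (via Rellich--Kondrachov), make the resulting form coercive, and invoke Lax--Milgram plus index theory. This is more abstract but shorter, and it avoids the auxiliary pairing with $(0,v_n-v_m)$ and the use of \eqref{beta1}. Your caveat about $g'(\theta_{\lambda_0}^i)$ being bounded below is easily handled: at fixed $\lambda_0$ the solution $\theta_{\lambda_0}$ is smooth on the compact torus, hence takes values in a compact subset of $(0,\infty)$, on which $g'>0$ is bounded away from zero.
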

\begin{proof}
First, we prove that the range of $A$ is closed in $E^1\times E^0$.
For that, take a Cauchy sequence, $(z_n)_n$, in the range of $A$; that is, $z_n=Aw_n$ for some  sequence
$(w_n)_n$  in $E^1\times E^0$. We claim that 
$(w_n)_n$ is a Cauchy sequence. Let $w_n=(v_n,f_n)$.  Then, according to \eqref{u1}, we have
\beqs\begin{split}(z_n-z_m, w_n-w_m)_{E^1\times E^0}&=(A(w_n-w_m),w_n-w_m)_{E^1\times E^0}\\&\qquad=B[w_n-w_m,w_n-w_m]\\&
\leq -C(\|D(v_n-v_m)\|_{E^0}^2+\|f_n-f_m\|_{E^0}^2)\\&
-\sum_i\sum_{j\neq i}\int_{\Tt^N}\beta''_{\ep,\lambda_0}(i,j)((v^i_n-v^i_m)-(v^j_n-v^j_m))^2\,
dx.
\end{split}
\eeqs
Moreover, we have
\beqs\begin{split}|(z_n-z_m, &w_n-w_m)_{E^1\times E^0}|\leq \|z_n-z_m\|_{E^1\times E^0}\|w_n-w_m\|_{E^1\times E^0}\\&
= \|z_n-z_m\|_{E^1\times E^0}(\|v_n-v_m\|_{ E^0}+\|Dv_n-Dv_m\|_{ E^0}+\|f_n-f_m\|_{ E^0})\\&
\leq \delta( \|Dv_n-Dv_m\|_{ E^0}^2+\|f_n-f_m\|_{ E^0}^2)+C_\delta \|z_n-z_m\|_{E^1\times E^0}^2\\&+\|z_n-z_m\|_{E^1\times E^0}\|v_n-v_m\|_{ E^0}.
\end{split}\eeqs
Let $ \mu$ be a positive constant to be chosen later. By selecting a suitably small $\delta$ and combining the inequalities above, we get
\beq\label{lemmasurjectA1}\begin{split}
& \|Dv_n-Dv_m\|_{ E^0}^2+\|f_n-f_m\|_{ E^0}^2\\&
 +\sum_i\sum_{j\neq i}\int_{\Tt^N\cap \{u^i-u^j-\psi^{ij}>0\}}\beta''_{\ep,\lambda_0}(i,j)((v^i_n-v^i_m)-(v^j_n-v^j_m))^2\,
dx\\ &
 \qquad \leq C \|z_n-z_m\|_{E^1\times E^0}^2+\|z_n-z_m\|_{E^1\times E^0}\|v_n-v_m\|_{ E^0}\\
 &\qquad \leq C_\mu \|z_n-z_m\|_{E^1\times E^0}^2+\mu \|v_n-v_m\|_{ E^0}^2. 
 \end{split}\eeq
 Next, we have 
\beqs\begin{split}
B[w_n-w_m&,(0,v_n-v_m)]=
\\&
\intor  \sum_{i=1}^d 
\left[
D_pH^i_{\lambda_0}(Du^i_{\lambda_0},x)\cdot \left(D(v^i_n-v^i_m)\right)(v_n^i-v_m^i)+(v^i_n-v^i_m)^2
\right]dx\\&
+\intor  \sum_{i=1}^d\sum_{i\neq j}\beta'_{\ep,\lambda_0}(i,j)
[
(v_n^i-v_m^i)
-(v_n^j-v_m^j)](v_n^i-v_m^i)\, dx\\&
-\intor  \sum_{i=1}^dg'(\theta^i_{\lambda_0})(f_n^i-f_n^i)(v_n^i-v_m^i)\, dx.
\end{split}\eeqs
Set
\[
M=\left[\sum_{i=1}^d
\int_{\Tt^N}\left(\sum_{i\neq j}\beta'_{\ep,\lambda_0}(i,j)
[
(v_n^i-v_m^i)
-(v_n^j-v_m^j)]\right)^2\right]^{1/2}.
\]
Then, using   \eqref{thetabounded},  \eqref{lipepest}, H\"{o}lder's inequality,         and Young's inequality, we get
\beq\label{lstineqlemmsurj}\begin{split}&B[w_n-w_m,(0,v_n-v_m)]\\&\geq \|v_n-v_m\|_{ E^0}^2\\&-C  (\|Dv_n-Dv_m\|_{ E^0}^2+ \|v_n-v_m\|_{ E^0} \|f_n-f_m\|_{ E^0}+M\|v_n-v_m\|_{
E^0})\\&
\geq  C\|v_n-v_m\|_{ E^0}^2-C  (\|Dv_n-Dv_m\|_{ E^0}^2+  \|f_n-f_m\|_{ E^0}^2+M^2).
\end{split}\eeq
On the other hand, \eqref{BA}   implies
that \beqs B[w_n-w_m,(0,v_n-v_m)]\leq C\|z_n-z_m\|_{E^1\times E^0}\|v_n-v_m\|_{E^0}.\eeqs
Therefore, 
\begin{align*}
&C\|v_n-v_m\|_{ E^0}^2-C  (\|Dv_n-Dv_m\|_{ E^0}^2+  \|f_n-f_m\|_{ E^0}^2+M^2)
\\&
\leq C\|z_n-z_m\|_{E^1\times E^0}\|v_n-v_m\|_{E^0}\\
&\leq C\|z_n-z_m\|_{E^1\times E^0}^2 +\mu_2 \|v_n-v_m\|_{E^0}^2 
\end{align*}
for some $\mu_2$ to be selected later. Next, taking into account that 
\[
M^2\leq C
\sum_i\sum_{j\neq i}\int_{\Tt^N}\beta''_{\ep,\lambda_0}(i,j)[(v^i_n-v^i_m)-(v^j_n-v^j_m)]^2\,
dx
\]
and using \eqref{lemmasurjectA1} and \eqref{lstineqlemmsurj},
 we obtain
\begin{align*}
\|v_n-v_m\|_{ E^0}^2\leq C\|z_n-z_m\|_{E^1\times E^0.}^2  
\end{align*}
Consequently, $v_n$\ is a Cauchy sequence in $L^2$. Finally, from 
\eqref{lemmasurjectA1} we infer that $w_n$ is a Cauchy sequence in $E^1\times E^0$ because of
the bound\beqs  \|w_n-w_m\|_{E^1\times E^0}^2\leq C  \|z_n-z_m\|_{E^1\times E^0}^2.\eeqs The last inequality and the continuity of $A$ imply that $R(A)$ is closed. 

Finally, we prove that $R(A)=E^1\times E^0$. Suppose that $R(A)\neq E^1\times E^0$. Since $R(A)$ is closed, there exists $z\in R(A)^\perp$ with $z\neq 0$, such that 
$B_{\lambda_0}[z,z]=0$. The argument in  the proof of Lemma \ref{Ainjectivelem} implies that  $z=0$, which is a contradiction.
\end{proof}

\begin{lem}\label{Lisomorphlem} The operator, $\mathcal{L}_{\lambda_0}:E^{k+2}\times E^{k+1}\to E^k\times E^{k+1}$,
given by \eqref{ELLF}   
         is an isomorphism for all $k\in \N$.
\end{lem}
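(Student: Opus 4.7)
My plan is to show that $\mathcal{L}_{\lambda_0}$ is a continuous linear bijection between the Hilbert spaces $E^{k+2}\times E^{k+1}$ and $E^k\times E^{k+1}$ and then invoke the open mapping theorem. Continuity is immediate from \eqref{ELLF} and the a priori bounds \eqref{cinftyestimate}: all coefficients of $\mathcal{L}_{\lambda_0}$ belong to $C^\infty(\Tt^N)$, and since $E^s$ is an algebra for $s>N/2$, multiplication by smooth functions preserves these Sobolev scales.

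For injectivity I would work at the level $E^{k+2}\times E^{k+1}\subset E^1\times E^0$ and appeal to Lemma \ref{Ainjectivelem}: if $\mathcal{L}_{\lambda_0}(w)=0$, then by \eqref{BA}, $(Aw,w_2)_{E^1\times E^0}=B_{\lambda_0}[w,w_2]=\intor\mathcal{L}_{\lambda_0}(w)\cdot w_2\,dx=0$ for every $w_2\in E^1\times E^0$, so $Aw=0$ and hence $w=0$. For surjectivity, given $(\phi,\psi)\in E^k\times E^{k+1}\subset L^2\times L^2$, I would let $z\in E^1\times E^0$ be the Riesz representative of the continuous linear functional $w_2\mapsto\intor(\phi,\psi)\cdot w_2\,dx$, and then use Lemma \ref{Asurjlem} to produce $w=(v,f)\in E^1\times E^0$ with $Aw=z$. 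By construction $(v,f)$ is a weak solution of $\mathcal{L}_{\lambda_0}(v,f)=(\phi,\psi)$.

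The main technical step, and what I expect to be the real work, is the elliptic bootstrap showing that the weak solution actually lies in $E^{k+2}\times E^{k+1}$. Because $g'(\theta^i_{\lambda_0})\geq c>0$, as a consequence of the uniform lower bounds \eqref{lowerboundthelog} in the {\bf L} case or \eqref{theta>1power} in the {\bf P-}$\frac{2}{N}$ case, the second component of $\mathcal{L}_{\lambda_0}(v,f)=(\phi,\psi)$ can be solved algebraically for $f^i$ in terms of $Dv^i$, $v$, and $\psi^i$. Substituting back into the first component yields a second-order equation for each $v^i$ whose principal symbol reads
\[
a^i_{kj}(x)=H^i_{\lambda_0,p_kp_j}(Du^i_{\lambda_0},x)\,\theta^i_{\lambda_0}+\frac{H^i_{\lambda_0,p_k}(Du^i_{\lambda_0},x)\,H^i_{\lambda_0,p_j}(Du^i_{\lambda_0},x)}{g'(\theta^i_{\lambda_0})},
\]
which is uniformly positive definite by the uniform convexity from Assumption \ref{A4} (preserved uniformly in $\lambda\in[0,1]$ by $H^i_\lambda$) together with the lower bound on $\theta^i_{\lambda_0}$. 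Since cross-coupling between different indices $i$ appears only in first- and zeroth-order terms, standard $H^s$-elliptic regularity with $C^\infty$ coefficients gives $v\in E^{k+2}$, and the explicit expression for $f^i$ then places $f$ in $E^{k+1}$. Combining continuity, injectivity and surjectivity, the open mapping theorem upgrades $\mathcal{L}_{\lambda_0}$ to an isomorphism.
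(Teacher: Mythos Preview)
Your proposal is correct and follows essentially the same route as the paper: injectivity via Lemma \ref{Ainjectivelem}, existence of a weak solution in $E^1\times E^0$ via Riesz representation and Lemma \ref{Asurjlem}, and then an elliptic bootstrap after solving the second component algebraically for $f^i$ and substituting into the first. The paper phrases the bootstrap as an induction on $k$ rather than a single appeal to $H^s$-regularity, and it does not explicitly invoke the open mapping theorem, but these are presentational differences only; the principal symbol you identify and the observation that cross-coupling in $j\ne i$ is of lower order are exactly what drives the argument in both versions.
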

\begin{proof} 
By Lemma \ref{Ainjectivelem}, $\mathcal{L}_{\lambda_0}$ is injective. Therefore, it suffices to prove that $\mathcal{L}_{\lambda_0}$ is surjective. To do so, we fix $w_0\in E^0\times E^{1}$ with 
$w_0=(v_0,f_0)$. We claim that there exists a solution, $w_1\in E^{2}\times E^1,$ to $\mathcal{L}_{\lambda_0}w_1=w_0$.

Consider the bounded linear functional, $w\to(w_0,w)_{(E^0)^2}$, in $E^1\times E^0$. According to the Riesz representation theorem, there exists $\widetilde{w}\in E^1\times E^0,$ such that 
$(w_0,w)_{(E^0)^2}=(\widetilde{w},w)_{E^1\times E^0}$ for any $w\in E^1\times E^0$. In light of Lemmas \ref{Ainjectivelem} and \ref{Asurjlem}, 
the operator $A$ is invertible in $E^1\times E^0$. We define $w_1:=A^{-1}\widetilde{w}$. Then, for any $w\in E^1\times E^0,$
$$(Aw_1,w)_{E^1\times E^0}=(w_0,w)_{E^0\times E^0};$$ that is, $w_1=(v,f)$ is a weak solution of 
\beq\label{lw1-w0}\begin{cases}
(H^i_{\lambda_0,p_kp_j}(Du^i_{\lambda_0},x)v^i_{x_j}\theta_{\lambda_0}^i+H^i_{\lambda_0,p_k}(Du^i_{\lambda_0},x)f^i)_{x_k}-f^i
\\-\sum_{j\neq i}[(\beta''_{\ep,\lambda_0}(i,j)\theta_{\lambda_0}^i+\beta''_{\ep,\lambda_0}(j,i)\theta_{\lambda_0}^j)(v^i-v^j)+\beta'_{\ep,\lambda_0}(i,j)f^i-\beta'_{\ep,\lambda_0}(j,i)f^j]=v_0^i\\
\,\\
D_pH^i_{\lambda_0}(Du^i_{\lambda_0},x)\cdot Dv^i+v^i+\sum_{j\neq i}(\beta'_{\ep,\lambda_0}(i,j)(v^i-v^j))-g'(\theta^i_{\lambda_0})f^i=f_0^i
\end{cases}
\eeq
for $x\in\Tt^N$ and $i=1, \hdots, d$. 
 From the second equation in \eqref{lw1-w0}, we obtain
\beq\label{fiexplic}f^i=(g'(\theta^i_{\lambda_0}))^{-1}(D_pH^i_{\lambda_0}(Du^i_{\lambda_0},x)\cdot Dv^i+v^i+\sum_{j\neq i}(\beta'_{\ep,\lambda_0}(i,j)(v^i-v^j))-f_0^i).\eeq
Using \eqref{fiexplic} in the first equation of \eqref{lw1-w0},
we see that $v^i$ is a weak solution to

\beqs\begin{split}
&[H^i_{\lambda_0,p_kp_j}(Du^i_{\lambda_0},x)v^i_{x_j}\theta_{\lambda_0}^i+H^i_{\lambda_0,p_k}(Du^i_{\lambda_0},x) H^i_{\lambda_0,p_j}(Du^i_{\lambda_0},x)v^i_{x_j}(g'(\theta^i_{\lambda_0}))^{-1}]_{x_k}\\&
=-[(g'(\theta^i_{\lambda_0}))^{-1}(v^i+\sum_{j\neq i}(\beta'_{\ep,\lambda_0}(i,j)v^i-\beta'_{\ep,\lambda_0}(i,j)v^j)-f_0^i)]_{x_k}+f^i\\&
+\sum_{j\neq i}[(\beta''_{\ep,\lambda_0}(i,j)\theta_{\lambda_0}^i+\beta''_{\ep,\lambda_0}(j,i)\theta_{\lambda_0}^j)(v^i-v^j)+\beta'_{\ep,\lambda_0}(i,j)f^i-\beta'_{\ep,\lambda_0}(j,i)f^j]+v_0^i.
\end{split}
\eeqs
For any $i=1,\ldots, d$, the right-hand side of the previous equation is in $L^2(\Tt^N)$. 
Thus, the elliptic regularity theory implies that $v\in E^2$. Consequently,  \eqref{fiexplic} gives $f\in E^1.$ 

By induction, if $w_0=(v_0,f_0)\in E^k\times E^{k+1}$, then 
$w_1=(v,f)\in E^{k+2}\times E^{k+1}$. This concludes the proof of the lemma.
\end{proof} 
Claim 3 is now a straightforward consequence of Lemma \ref{Lisomorphlem}  combined with  the implicit function theorem in Banach spaces, see, for instance, \cite{Die}.
\bigskip

Finally, we gather the previous results to prove
the existence claim in
Theorem \ref{main2}.

\begin{proof}[Proof of Theorem \ref{main2} - existence]
Claims 1-3 imply that $\Lambda=[0,1]$. In particular $1\in \Lambda$, i.e., there exists a $C^\infty$ solution of \eqref{obstacleepmfg1sys}-\eqref{obstacleepmfg2sys}.
\end{proof}

\subsection{Uniqueness}

Here, we
complete the proof of Theorem \ref{main2} by 
proving the uniqueness of solutions for \eqref{obstacleepmfg1sys}-\eqref{obstacleepmfg2sys} using the monotonicity method.

\begin{proof}[Proof of Theorem \ref{main2} - uniqueness]
Let $(u_1, \theta_1)$ and $(u_2, \theta_2)$ be  classical  solutions of \eqref{obstacleepmfg1sys}-\eqref{obstacleepmfg2sys}.
First,  we take \eqref{obstacleepmfg1sys}
with $(u,\theta)=(u_1, \theta_1)$ and  $(u,\theta)=(u_2, \theta_2)$ and  subtract the corresponding equations. Next, we multiply by $\theta^i_1-\theta^i_2$ and integrate. Accordingly,  we obtain  
\begin{equation*}\begin{split}&\int_{\Tt^N} [H^i(Du^i_1,x)-H^i(Du^i_2,x)](\theta^i_1-\theta^i_2)+(u^i_1-u^i_2)(\theta^i_1-\theta^i_2)dx
\\&+\sum_{j\ne i}\int_{\Tt^N} (\beta_\ep(u_1^i-u_1^j-\psi^{ij})-\beta_\ep(u_2^i-u_2^j-\psi^{ij}))(\theta^i_1-\theta^i_2)dx\\&
=\int_{\Tt^N}(g(\theta^i_1)-g(\theta^i_2))(\theta^i_1-\theta^i_2)dx.\end{split}\end{equation*}
Now, we take \eqref{obstacleepmfg2sys} with  $(u,\theta)=(u_1, \theta_1)$ and $(u,\theta)=(u_2, \theta_2)$. Next, we subtract the corresponding equations,  multiply by  $ u^i_1-u^i_2,$  and integrate by parts. Accordingly, we get
\begin{equation*}\begin{split}0&=  \int_{\Tt^N} (D_p H^i(D u^i_1, x)\theta^i_1-D_pH^i(Du^i_2,x)\theta^i_2)D(u^i_1-u^i_2)+(u^i_1-u^i_2)(\theta^i_1-\theta^i_2)dx\\&
+\sum_{j\ne i}\int_{\Tt^N}
\left[
\beta'_\ep(u_1^i-u_1^j-\psi^{ij})\theta^i_1-\beta'_\ep(u_1^j-u_1^i-\psi^{ji})\theta^j_1\right.\\&\left.
-\left(\beta_\ep'(u_2^i-u_2^j-\psi^{ij})\theta^i_2-\beta_\ep'(u_2^j-u_2^i-\psi^{ji})\theta^j_2\right)\right](u^i_1-u^i_2)dx.\end{split}\end{equation*}
Finally, we subtract the two previous identities, sum on $i,$ and use  the monotonicity of $g$ to conclude that 
\begin{equation*}\begin{split} 
&0\leq\sum_{i=1}^d\int_{\Tt^N}(g(\theta^i_1)-g(\theta^i_2))(\theta^i_1-\theta^i_2)dx\\&
=\sum_{i=1}^d \int_{\Tt^N} [(H^i(Du^i_1,x)-H^i(Du^i_2,x))(\theta^i_1-\theta^i_2)\\&
-(D_p H^i(D u_1^i, x)\theta_1-D_pH^i(Du_2^i,x)\theta_2)D(u^i_1-u^i_2)]dx\\&
+\sum_{i=1}^d\sum_{j\ne i}\int_{\Tt^N} (\beta_\ep(u_1^i-u_1^j-\psi^{ij})-\beta_\ep(u_2^i-u_2^j-\psi^{ij}))(\theta^i_1-\theta^i_2)dx\\&
-\sum_{i=1}^d\sum_{j\ne i}\int_{\Tt^N}
\left[
\beta'_\ep(u_1^i-u_1^j-\psi^{ij})\theta^i_1-\beta'_\ep(u_1^j-u_1^i-\psi^{ji})\theta^j_1\right.\\&\left.
-\left(\beta_\ep'(u_2^i-u_2^j-\psi^{ij})\theta^i_2-\beta_\ep'(u_2^j-u_2^i-\psi^{ji})\theta^j_2\right)
\right](u^i_1-u^i_2)dx.
\end{split}\end{equation*}
Now, from the convexity of $\beta_\ep$, we infer that 
\begin{equation*}\begin{split}
&\sum_{i=1}^d\sum_{j\ne i}\int_{\Tt^N} (\beta_\ep(u_1^i-u_1^j-\psi^{ij})-\beta_\ep(u_2^i-u_2^j-\psi^{ij}))(\theta^i_1-\theta^i_2)dx\\&
-\sum_{i=1}^d\sum_{j\ne i}\int_{\Tt^N} [\beta'_\ep(u_1^i-u_1^j-\psi^{ij})\theta^i_1-\beta'_\ep(u_1^j-u_1^i-\psi^{ji})\theta^j_1\\&
-(\beta_\ep'(u_2^i-u_2^j-\psi^{ij})\theta^i_2-\beta_\ep'(u_2^j-u_2^i-\psi^{ji})\theta^j_2)](u^i_1-u^i_2)dx\\&
=\sum_{i=1}^d\sum_{j\ne i}\int_{\Tt^N} (\beta_\ep(u_1^i-u_1^j-\psi^{ij})-\beta_\ep(u_2^i-u_2^j-\psi^{ij}))(\theta^i_1-\theta^i_2)dx\\&
-\sum_{i=1}^d\sum_{j\ne i}\int_{\Tt^N} [\beta'_\ep(u_1^i-u_1^j-\psi^{ij})\theta^i_1
-\beta_\ep'(u_2^i-u_2^j-\psi^{ij})\theta^i_2](u^i_1-u^i_2-u_1^j+u_2^j)dx\\&
=-\sum_{i=1}^d\sum_{j\ne i}\int_{\Tt^N}  [\beta_\ep(u_1^i-u_1^j-\psi^{ij})-\beta_\ep(u_2^i-u_2^j-\psi^{ij})\\&-\beta'_\ep(u_2^i-u_2^j-\psi^{ij})(u^i_1-u_1^j-u^i_2+u_2^j)]\theta^i_2 dx\\&
-\sum_{i=1}^d\sum_{j\ne i}\int_{\Tt^N}[\beta_\ep(u_2^i-u_2^j-\psi^{ij})-\beta_\ep(u_1^i-u_1^j-\psi^{ij})\\&-\beta'_\ep(u_1^i-u_1^j-\psi^{ij})(u^i_2-u_2^j-u^i_1+u_1^j)]\theta^i_1 dx\\&
\leq 0.
\end{split}\end{equation*}
Moreover, using \eqref{Hconvexity} of Assumption \ref{A4}, we get 
\begin{equation*}\begin{split} 
&\sum_{i=1}^d \int_{\Tt^N} [(H^i(Du^i_1,x)-H^i(Du^i_2,x))(\theta^i_1-\theta^i_2)\\&
-(D_p H^i(D u_1^i, x)\theta_1-D_pH^i(Du_2^i,x)\theta_2)D(u^i_1-u^i_2)]dx\\&
=-\sum_{i=1}^d\int _{\Tt^N}[H^i(Du^i_1,x)-H^i(Du^i_2,x)-D_p H^i(D u^i_2, x)D(u^i_1-u^i_2)]\theta^i_2dx\\&
-\sum_{i=1}^d\int _{\Tt^N}[H^i(Du^i_2,x)-H^i(Du^i_1,x)-D_p H^i(D u^i_1, x)D(u^i_2-u^i_1)]\theta^i_1dx\\&
\leq -C\sum_{i=1}^d\int _{\Tt^N} |D(u^i_1-u^i_2)|^2dx.
\end{split}\end{equation*}
By combining the last two inequalities, we conclude that 
$$0\leq  -C\sum_{i=1}^d\int _{\Tt^N} |D(u^i_1-u^i_2)|^2dx.$$
Thus, we infer that $u^i_1= u^i_2$ for any $i=1,\ldots,d$.
Consequently, 
\begin{equation*}\begin{split} \theta_1^i&=g^{-1}\left(H^i(Du_1^i,x)+u_1^i+\sum_{j\ne i}\beta_\ep(u_1^i-u_2^j-\psi^{ij})\right)\\&
=g^{-1}\left(H^i(Du_2^i,x)+u_2^i+\sum_{j\ne i}\beta_\ep(u_2^i-u_2^j-\psi^{ij})\right)\\&=\theta_2^i.\end{split}\end{equation*}
This concludes the proof of the uniqueness of the solution of \eqref{obstacleepmfg1sys}-\eqref{obstacleepmfg2sys}.
\end{proof}


\section{Proof of Theorem \ref{main1}}
\label{pmain1}

Now, we use Theorem \ref{main2} and a limiting procedure 
to  prove Theorem  \ref{main1}.

\begin{proof}[Proof of Theorem \ref{main1}]

Let $(u_\ep,\theta_\ep)$ be the classical solution of \eqref{obstacleepmfg1sys}-\eqref{obstacleepmfg2sys}, whose existence is guaranteed by Theorem  \ref{main2}.
By estimates \eqref{w22estlog},   \eqref{calphaestlog}, and  \eqref{w12thetalog}  if Assumption \ref{A5} {\bf L} holds,  or by estimates \eqref{holderestimpower},  \eqref{w22estpower} and \eqref{w12thetapower} if  Assumptions \ref{A5} {\bf P-$\frac 2 N$} and \ref{A6} hold, there exist  $\gamma\in (0,1)$, $u\in (W^{2,2}(\Tt^N))^d\cap (C^\gamma(\Tt^N))^d$, and $\theta\in (W^{1,2}(\Tt^N))^d$, such that, up to extracting a subsequence, as $\ep\to 0$, 
\beqs u_\ep\to u\quad \text{in }(L^\infty(\Tt^N))^d,\eeqs
\beqs Du_\ep\to Du,\quad \theta_\ep\to \theta\quad \text{in }(L^2(\Tt^N))^d,\eeqs
\beqs D^2u_\ep\rightharpoonup D^2u,\quad \text{in }(L^2(\Tt^N))^d,\eeqs and, for any $i=1,\ldots, d$ and $j\neq i,$
\beqs u^i-u^j-\psi^{ij}\leq 0\quad\text{in }\Tt^N.\eeqs
The uniform convergence of $u^i_\ep$ to $u^i$ implies that, if $x\in \Tt^N$ is such that $u^i(x)-u^j(x)-\psi^{ij}(x)<0$, then, for a small enough $\ep$, we have 
$u_\ep^i(x)-u_\ep^j(x)-\psi^{ij}(x)<0$. Consequently, $$\beta_\ep(u_\ep^i(x)-u_\ep^j(x)-\psi^{ij}(x))=\beta'_\ep(u_\ep^i(x)-u_\ep^j(x)-\psi^{ij}(x))=0.$$
We deduce that the limit, $(u,\theta),$ is a weak solution of
\beqs H^i(Du^i,x)+u^i\leq g(\theta^i)\quad\text{in }\Tt^N,\eeqs
\beqs H^i(Du^i,x)+u^i= g(\theta^i)\quad\text{in }\cap_{j\neq i}\{u^i-u^j-\psi^{ij}<0\}.
\eeqs
Next, for $j\neq i$, we introduce the measures
\[
\nu_\epsilon^{ij}=\beta'_\ep(u_\ep^i(x)-u_\ep^j(x)-\psi^{ij}(x))\theta^j.
\]
By \eqref{btl45}, we have that $\int_{\Tt^N}\nu_\epsilon^{ij} \, dx\leq C$ for some constant, $C$, independent of $\epsilon$. Thus, there exist non-negative measures, $\nu^{ij}$, such that
\[
-\di (D_p H^i(Du^i,x)\theta^i)+\theta^i+\sum_{j\neq i}\left(\nu^{ij}-\nu^{ji}\right)=1.
\]  
Moreover, $\nu^{ij}$\ is supported in the set $u^i-u^j-\psi^{ij}=0$. 
\end{proof}

\section{Uniqueness of the limit}
\label{secunlim}

In this last section, we discuss the uniqueness of the limit, $(u, \theta)$, in the proof of Theorem \ref{main1}. 

\begin{pro}
Suppose that Assumptions \ref{A1}-\ref{A4} and \ref{B1}
hold, and that either

\begin{itemize}
\item[-]  Assumption \ref{A5} {\bf L} or
\item[-]  Assumptions \ref{A5} {\bf P-$\frac 2 N$}, \ref{A6} and \ref{A7}

\end{itemize}
hold. For $\epsilon>0$, let $(u_{\epsilon}, \theta_{\epsilon})$
be the solution of  \eqref{obstacleepmfg1sys}-\eqref{obstacleepmfg2sys}.
Then, the limit, $(u, \theta)$, as $\epsilon\to 0$ of the family
$(u_{\epsilon}, \theta_{\epsilon})$ exists; that is, $(u, \theta)$ is independent of the choice of subsequence. 
\end{pro}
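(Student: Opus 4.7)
The plan is to show that any two weak limits $(u_a,\theta_a,\nu_a)$ and $(u_b,\theta_b,\nu_b)$ of subsequences of the family $\{(u_\epsilon,\theta_\epsilon)\}$ satisfy $u_a=u_b$ and $\theta_a=\theta_b$; together with the uniqueness of $(u_\epsilon,\theta_\epsilon)$ from Theorem \ref{main2} and the compactness established in the proof of Theorem \ref{main1}, this yields the claim. Both limits are weak solutions of \eqref{obstacleepmfg1}--\eqref{obstacleepmfg2} and inherit the $\epsilon$-independent lower bound $\theta^i\ge\theta_0>0$ from Propositions \ref{prolog} and \ref{MEst}.

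The crucial preliminary step is to upgrade the limiting variational inequality \eqref{obstacleepmfg1} to an a.e. equation. Multiplying \eqref{obstacleepmfg1sys} by $\theta_\epsilon^i$ and integrating yields
\[
\intor [H^i(Du_\epsilon^i,x)+u_\epsilon^i-g(\theta_\epsilon^i)]\theta_\epsilon^i\,dx+\sum_{j\ne i}\intor \beta_\epsilon(u_\epsilon^i-u_\epsilon^j-\psi^{ij})\theta_\epsilon^i\,dx=0.
\]
By Remark \ref{whatever}, $0\le\beta_\epsilon(s)\le s_+\beta'_\epsilon(s)$; combined with the uniform convergence $u_\epsilon\to u$ (which, because $u^i-u^j-\psi^{ij}\le 0$ in the limit, forces $\|(u_\epsilon^i-u_\epsilon^j-\psi^{ij})_+\|_\infty\to 0$) and the $\epsilon$-uniform bound \eqref{btl45} together with the positivity of $\psi^{ij}$, I would deduce that the penalty integral converges to zero. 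Extracting an a.e.-convergent subsequence of $(u_\epsilon,Du_\epsilon,\theta_\epsilon)$ and applying Fatou's lemma to the non-negative integrand $\beta_\epsilon(u_\epsilon^i-u_\epsilon^j-\psi^{ij})\theta_\epsilon^i$ together with the lower bound $\theta_\epsilon^i\ge\theta_0$, we obtain $\beta_\epsilon(u_\epsilon^i-u_\epsilon^j-\psi^{ij})\to 0$ a.e. Passing to the limit pointwise in \eqref{obstacleepmfg1sys} then produces $H^i(Du^i,x)+u^i=g(\theta^i)$ a.e. in $\Tt^N$, for both $(u_a,\theta_a)$ and $(u_b,\theta_b)$.

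With the HJ holding as equations for both solutions, I would conclude via the Lasry--Lions monotonicity computation used in the uniqueness proof of Theorem \ref{main2}. Specifically, subtract the HJ equations for $a$ and $b$, multiply by $\theta_a^i-\theta_b^i$, integrate and sum over $i$; subtract the transport equations, multiply by $u_a^i-u_b^i$, integrate by parts and sum. After the $(u_a^i-u_b^i)(\theta_a^i-\theta_b^i)$ terms cancel upon subtraction, the switching-measure contribution rearranges, via the index swap $i\leftrightarrow j$, as
\[
\sum_{i,j:\,i\ne j}\intor (\nu_a^{ij}-\nu_b^{ij})\bigl[(u_a^i-u_a^j)-(u_b^i-u_b^j)\bigr]\,dx,
\]
which is $\ge 0$: on $\supp\nu_a^{ij}$ the bracket equals $\psi^{ij}-(u_b^i-u_b^j)\ge 0$ by the obstacle constraint for $b$, and analogously for $\nu_b^{ij}$. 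Convexity \eqref{Hconvexity} and $\theta^i\ge\theta_0$ control the Hamiltonian contribution from above by $-c\theta_0\sum_i\intor |Du_a^i-Du_b^i|^2\,dx$, while monotonicity of $g$ gives $\sum_i\intor (g(\theta_a^i)-g(\theta_b^i))(\theta_a^i-\theta_b^i)\,dx\ge 0$. The resulting inequality forces $Du_a=Du_b$ and $g(\theta_a^i)=g(\theta_b^i)$ a.e.; strict monotonicity of $g$ gives $\theta_a=\theta_b$, and the HJ equation then yields $u_a=u_b$.

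The main obstacle is the first step: without promoting the variational inequality to an a.e. equation in the limit, the standard subtraction of HJ inequalities has no controllable sign and the Lasry--Lions computation cannot close. Remark \ref{whatever}, encoding $\beta_\epsilon(s)\le s\beta'_\epsilon(s)$, is the key tool enabling this upgrade; everything afterward is a direct transcription of the monotonicity argument of Theorem \ref{main2}, with the penalty terms replaced by the switching measures and controlled via their supports.
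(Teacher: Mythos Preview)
Your proposal is correct but takes a genuinely different route from the paper.

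The paper compares each limit $(u_k,\theta_k)$ directly with the \emph{smooth approximation} $(u_{\epsilon_n^{\tilde k}},\theta_{\epsilon_n^{\tilde k}})$ coming from the other subsequence: since $u_k$ already satisfies the constraint $u_k^i-u_k^j-\psi^{ij}\le 0$, one has $\beta_{\epsilon_n^{\tilde k}}(u_k^i-u_k^j-\psi^{ij})=0$, and convexity of $H^i$ and of $\beta_{\epsilon_n^{\tilde k}}$ yields a one-sided inequality between $u_k$ and $u_{\epsilon_n^{\tilde k}}$. This is integrated against the smooth density $\theta_{\epsilon_n^{\tilde k}}^i$, the penalized Fokker--Planck equation absorbs the linear terms (all integrations by parts being trivially justified at the $\epsilon$-level), and Fatou's lemma is applied only at the very end to pass to the limit in the resulting scalar inequality.

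Your approach instead first upgrades the limiting Hamilton--Jacobi inequality to an a.e.\ equality, using $\beta_\epsilon(s)\le s_+\beta'_\epsilon(s)$ together with \eqref{btl45} and the uniform convergence $u_\epsilon\to u$, and then runs the Lasry--Lions computation directly on the two limit triples $(u_a,\theta_a,\nu_a)$ and $(u_b,\theta_b,\nu_b)$, controlling the switching measures through their supports. This works and produces the useful intermediate fact $H^i(Du^i,x)+u^i=g(\theta^i)$ a.e., which the paper does not state. The cost is that the monotonicity step is carried out at the limit level: testing the transport equation (which contains the singular measures $\nu^{ij}$) against $u_a^i-u_b^i\in W^{2,2}\cap C^\gamma$ and verifying integrability of products such as $|Du|^2\theta$ and $|D_pH^i(Du^i)|\,\theta^i\,|D(u_a^i-u_b^i)|$ require a density argument and H\"older estimates on the limit objects. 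These go through under the stated assumptions (the $W^{1,p}$ bounds of Propositions \ref{prolog}--\ref{propower} and $\theta\in W^{1,2}$ suffice), but you should spell them out. The paper's device of staying at the penalized level until the final Fatou step avoids all of this bookkeeping.
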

\begin{proof}
For $k=0,1$, consider  a sequence, $\epsilon_n^k,$ converging to $0$. Let $(u_k, \theta_k)=\lim_{n\to \infty} (u_{\epsilon_n^k}, \theta_{\epsilon_n^k})$. 
By \eqref{obstacleepmfg1}, we have
\[
H^i(Du^i_k,x)+u^i_k- g(\theta^i_k)\leq 0
\]
and
\[
u^i_k-u^j_k-\psi^{ij}\leq 0. 
\]
For $k=0,1$, let $\tilde k=1-k$. Taking into account the preceding inequalities and the uniform convexity of $H^i$, we have 
\begin{align*}
        0\geq &H^i(Du^i_k,x)- g(\theta^i_k)+u^i_k+
        \sum_j\beta_{\epsilon_n^{\tilde k}}(u^i_k-u^j_k-\psi^{ij})\\\geq& H^i(Du^i_{\epsilon_n^{\tilde k}},x)- g(\theta^i_{\epsilon_n^{\tilde k}})+u^i_{\epsilon_n^{\tilde k}}+\sum_j\beta_{\epsilon_n^{\tilde k}}(u^i_{\epsilon_n^{\tilde k}}-u^j_{\epsilon_n^{\tilde k}}-\psi^{ij})\\ &+u^i_k-u^i_{\epsilon_n^{\tilde k}}
        + D_p
        H^i(Du^i_{\epsilon_n^{\tilde k}},x)\cdot D(u^i_k-u^i_{\epsilon_n^{\tilde k}})\\&+
        \sum_{j\neq i}\beta_{\epsilon_n^{\tilde k}}'(u^i_{\epsilon_n^{\tilde k}}-u^j_{\epsilon_n^{\tilde k}}-\psi^{ij})((u^i_k-u^i_{\epsilon_n^{\tilde k}})-(u^j_k-u^j_{\epsilon_n^{\tilde k}}))\\ &+c\gamma| D(u_k^i-u_{\epsilon_n^{\tilde k}}^i)|^2+g(\theta^i_{\epsilon_n^{\tilde k}})-g(\theta^i_k) 
\end{align*}
for some $c>0$. Integrating with respect to $\theta^i_{\epsilon_n^{\tilde k}}$
and adding over $i$ and $k$ give
\[
\sum_{i,k}\int_{\Tt^N}\left[\gamma| D(u_k^i-u_{\epsilon_n^{\tilde k}}^i)|^2\theta^i_{\epsilon_n^{\tilde
                k}}
+(g(\theta^i_{\epsilon_n^{\tilde
                k}})-g(\theta^i_k))\theta^i_{\epsilon_n^{\tilde
                k}}+u^i_k-u^i_{\epsilon_n^{\tilde k}}\right]dx\leq 0. 
\]
Because $\theta^i_{\epsilon_n^{\tilde
                k}}$ is bounded in $(W^{1,2}(\Tt^N))^d$ and because
$u_{\epsilon_n^{\tilde k}}$ is bounded in $(W^{2,2}(\Tt^N))^d$, by extracting a further subsequence, if necessary, we have
the following almost everywhere convergences:\[
|D(u_k^i-u_{\epsilon_n^{\tilde k}}^i)|^2\theta^i_{\epsilon_n^{\tilde
                k}}
\to| D(u_k^i-u_{\tilde k}^i)|^2\theta^i_{\tilde k} 
\]
and
\[
g(\theta^i_{\epsilon_n^{\tilde
                k}})\theta^i_{\epsilon_n^{\tilde
                k}}\to g(\theta^i_{\tilde
        k})\theta^i_{\tilde
        k}.
\]
Furthermore, 
\[
\int_{\Tt^N}g(\theta^i_k)\theta^i_{\epsilon_n^{\tilde
                k}}dx\to 
\int_{\Tt^N}g(\theta^i_k)\theta^i_{\tilde
        k}dx.
\]
Consequently, taking into account that 
\[
\lim_{n\to \infty}\sum_{k}\int_{\Tt^N}u^i_k-u^i_{\epsilon_n^{\tilde k}}dx=0
\]
and that $z\mapsto z g(z)$ is bounded by below, and using Fatou's Lemma, we obtain
\[
\sum_{i,k}\int_{\Tt^N}\gamma\ |D(u_k^i-u_{\tilde k}^i)|^2\theta^i_{\tilde k}
+(g(\theta^i_{\tilde k})-g(\theta^i_k))\theta^i_{\tilde k}\leq 0.  
\]
Because $g$ is monotone increasing, the preceding inequality implies that $\theta_1=\theta_2$ and that $Du_1=Du_2$. Thus, using \eqref{obstacleepmfg1}, we have $u_1=u_2$. 
\end{proof}

%
%



\bibliographystyle{plain}

\bibliography{mfg}

\end{document}